\newtheorem{theorem}{Theorem}           
\newtheorem{lemma}[theorem]{Lemma}               
\newtheorem{corollary}[theorem]{Corollary}
\theoremstyle{definition}
\newtheorem{definition}[theorem]{Definition}
\newtheorem{remark}[theorem]{Remark}
\begin{document}
	
\title{Generalized weighted Ostrowski and Ostrowski-Gr\"uss 
type inequalities on time scales via a parameter function}

\author{Seth Kermausuor, Eze R. Nwaeze and Delfim F. M. Torres}

\address{Seth Kermausuor, 
Department of Mathematics and Statistics, 
Auburn University, Auburn, AL 36849, USA\\
\email{skk0002@auburn.edu}}

\address{Eze R. Nwaeze, 
Department of Mathematics,
Tuskegee University, 
Tuskegee, AL 36088, USA\\
\email{enwaeze@mytu.tuskegee.edu}}

\address{Delfim F. M. Torres, 
CIDMA, Department of Mathematics, 
University of Aveiro, 3810-193 Aveiro, Portugal\\
\email{delfim@ua.pt}}

\CorrespondingAuthor{Delfim F. M. Torres}

\date{Submitted 23.12.2016; Revised 21.06.2017; Accepted 27.06.2017}

% ------------------------------------------

\keywords{Ostrowski's inequality; 
Ostrowski--Gr\"uss inequality; 
parameter function; 
time scales}

\subjclass{26D10, 26D15, 26E70}

\thanks{This research was partially supported 
by Portuguese funds through CIDMA and FCT, 
within project UID/MAT/04106/2013} 

% ------------------------------------------

\begin{abstract}
We prove generalized weighted Ostrowski and Ostrowski--Gr\"uss type inequalities 
on time scales via a parameter function. In particular, our result extends a result 
of Dragomir and Barnett. Furthermore, we apply our results to the continuous, 
discrete, and quantum cases, to obtain some interesting new inequalities. 
\end{abstract}

\maketitle

% ------------------------------------------

\section{Introduction}

In order to estimate the absolute deviation of a differentiable function 
from its integral mean, Dragomir and Barnett \cite{DB} obtained in 1999
the following Ostrowski type inequality.

\begin{theorem}[See \cite{DB}]
\label{DB}
Let $f:[a,b]\to\mathbb{R}$ be continuous on $[a,b]$ and twice differentiable 
on $(a,b)$ with second derivative $f'':(a,b)\to\mathbb{R}$. Then,
\begin{align*}
\Bigg|f(x)-\frac{f(b)-f(a)}{b-a}\Big(x-\frac{a+b}{2}\Big)
&-\frac{1}{b-a}\int_a^af(t)dt\Bigg|\\
&\leq\frac{M}{2}\Bigg\{\Bigg[\frac{(x-\frac{a+b}{2})^2}{(b-a)^2}
+\frac{1}{4}\Bigg]^2+\frac{1}{12}\Bigg\}(b-a)^2 
\end{align*}
for all $x\in[a,b]$, where $\displaystyle M=\sup_{a<t<b}|f''(t)|<\infty$.
\end{theorem}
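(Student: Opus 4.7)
The plan is to use the Peano kernel / double-integration-by-parts method. I aim to produce an explicit identity of the form
\[
f(x) - \frac{f(b)-f(a)}{b-a}\Big(x-\tfrac{a+b}{2}\Big) - \frac{1}{b-a}\int_a^b f(t)\,dt \;=\; \int_a^b \Phi(x,t)\, f''(t)\,dt
\]
for an explicit piecewise-quadratic kernel $\Phi(x,\cdot)$; the stated inequality will then follow immediately from the triangle inequality and the hypothesis $|f''|\le M$.

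To derive the identity I would write $f = L + R$, where $L(t)=\tfrac{b-t}{b-a}f(a)+\tfrac{t-a}{b-a}f(b)$ is the linear Lagrange interpolant of $f$ through the endpoints $(a,f(a))$ and $(b,f(b))$, and $R(t)=\int_a^b G(t,s)f''(s)\,ds$ is the corresponding interpolation remainder, with $G$ the standard Green's function ($G(t,s)=(s-a)(t-b)/(b-a)$ for $s\le t$ and $(t-a)(s-b)/(b-a)$ for $s\ge t$). Two direct calculations then give
\[
L(x) = \tfrac12(f(a)+f(b)) + \frac{(x-(a+b)/2)(f(b)-f(a))}{b-a},
\]
and, using Fubini on $R$,
\[
\frac{1}{b-a}\int_a^b f(t)\,dt = \tfrac12(f(a)+f(b)) + \frac{1}{2(b-a)}\int_a^b (s-a)(s-b)f''(s)\,ds.
\]
Subtracting these from $f(x)=L(x)+R(x)$ yields the identity above with $\Phi(x,s) = G(x,s) - \tfrac{(s-a)(s-b)}{2(b-a)}$, which simplifies on $[a,x]$ to $\tfrac{(s-a)(2x-b-s)}{2(b-a)}$ and on $[x,b]$ to $\tfrac{(s-b)(2x-a-s)}{2(b-a)}$.

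Taking absolute values and using $|f''|\le M$ gives $|\mathrm{LHS}| \le M \int_a^b |\Phi(x,s)|\,ds$, so the remaining task is to evaluate the right-hand side in closed form. The kernel $\Phi(x,\cdot)$ vanishes at $s=a,x,b$ and has two further potential zeros at $s=2x-b$ and $s=2x-a$; exactly one of these falls inside the interior of its corresponding subinterval depending on whether $x$ lies to the left or right of the midpoint $(a+b)/2$. I would therefore split the integral at the relevant zero, perform the routine polynomial integrations on each piece, and then simplify. The main obstacle is precisely this last, purely algebraic step: organising the sign analysis correctly and collapsing the resulting polynomial in $x$, $a$, $b$ into the compact symmetric form stated on the right-hand side of the theorem.
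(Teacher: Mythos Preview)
Your identity and the bound $|\mathrm{LHS}|\le M\int_a^b|\Phi(x,s)|\,ds$ are correct, but your final expectation is not: the integral $\int_a^b|\Phi(x,s)|\,ds$ does \emph{not} simplify to the constant on the right-hand side of the theorem. For instance, at $x=\tfrac{a+b}{2}$ your kernel becomes $\Phi=-(s-a)^2/(2(b-a))$ on the left half and $-(s-b)^2/(2(b-a))$ on the right half, giving $\int|\Phi|=(b-a)^2/24$, whereas the stated right-hand side equals $7(b-a)^2/96$. So the ``purely algebraic step'' you anticipate will fail to produce the displayed expression; what you have actually proved is a strictly sharper inequality, from which the stated one of course follows, but not by mere simplification.

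The paper (following Dragomir--Barnett) takes a different route that does land exactly on the displayed constant. It applies the Montgomery identity (Lemma~\ref{Wlem1}) twice---once to $f$ and once to $f^{\Delta}$---to obtain
\[
\mathrm{LHS}=\frac{1}{(b-a)^2}\int_a^b\!\!\int_a^b K(t,x)\,K(s,t)\,f''(s)\,ds\,dt,
\]
with the one-sided kernel $K(t,x)=t-a$ on $[a,x)$ and $t-b$ on $[x,b]$, and then bounds by $M(b-a)^{-2}\iint|K(t,x)||K(s,t)|\,ds\,dt$. Carrying out that double integral is what yields exactly $\tfrac{M}{2}\big\{[\,(x-\tfrac{a+b}{2})^2/(b-a)^2+\tfrac14]^2+\tfrac1{12}\big\}(b-a)^2$. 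Your single kernel $\Phi$ is precisely the $s$-slice $\frac{1}{(b-a)^2}\int_a^b K(t,x)K(s,t)\,dt$ of theirs, so the gap between the two bounds is just the triangle-inequality loss incurred by pulling the absolute value inside the $t$-integral. In short: your method is more elementary and gives a better constant; the paper's iterated method is what matches the statement verbatim and is what generalises to arbitrary time scales.
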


By introducing a parameter, Liu \cite{LJ} established in 2010
the following perturbed weighted generalized 
three-point integral inequality with bounded derivative.

\begin{theorem}[See \cite{LJ}]
Let $0\leq k\leq 1$ and $f:[a,b]\to\mathbb{R}$ be a differentiable mapping. 
Assume there exists a constant $\gamma\in\mathbb{R}$ such that 
$\gamma\leq f'(x)$ for $x\in[a,b]$, $g:[a,b]\to[0,\infty)$ 
is continuous and positive on $(a,b)$, and let $h:[a,b]\to\mathbb{R}$ 
be differentiable such that $h'(t)=g(t)$ on $[a,b]$. Then,
\begin{align*}
&\Bigg|\Bigg\{(1-k)f(x)-k\bigg[\frac{\int_a^xg(t)dt}{\int_a^bg(t)dt}f(a)
+\frac{\int_x^bg(t)dt}{\int_a^bg(t)dt}f(b)\bigg]\Bigg\}\int_a^bg(t)dt\\
&\quad -\gamma\Bigg\{(1-k)\bigg[(h(b)-h(a))\bigg(x-\frac{a+b}{2}\bigg)\\
&\quad +(b-a)\bigg(h(x)-\frac{h(a)+h(b)}{2}\bigg)\bigg]
\int_a^b(h(t)-h(x))dt\Bigg\}-\int_a^bf(t)g(t)dt\Bigg|\\
&\leq \begin{cases}
\displaystyle(1-k)\bigg[\frac{1}{2}\int_a^bg(t)dt+\Big|h(x)
-\frac{h(a)+h(b)}{2}\Big|\bigg](S-\gamma)(b-a), & k\in[0,\frac{1}{2}]\\[0.3cm]
\displaystyle k\bigg[\frac{1}{2}\int_a^bg(t)dt+\Big|h(x)
-\frac{h(a)+h(b)}{2}\Big|\bigg](S-\gamma)(b-a), & k\in(\frac{1}{2},1]
\end{cases}
\end{align*}
for all $x\in[a,b]$. 
\end{theorem}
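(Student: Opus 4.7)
The plan is to follow the classical Peano-kernel approach for Ostrowski--Gr\"uss type bounds, combined with the perturbation trick $f'\rightsquigarrow f'-\gamma$. I would first introduce a piecewise kernel $P(\cdot,x)$ on $[a,b]$ of the form $P(t,x)=h(t)-A$ for $t\in[a,x]$ and $P(t,x)=h(t)-B$ for $t\in(x,b]$, where the constants $A,B$ depend on $a,b,x,k$ and are chosen so that integration by parts produces the identity
\[
L(x):=\bigl\{(1-k)f(x)-k[\,\cdots\,]\bigr\}\int_a^b g(t)\,dt-\int_a^b f(t)g(t)\,dt=\int_a^b P(t,x)\,f'(t)\,dt.
\]
Since $P'(t)=h'(t)=g(t)$ on each piece, the $-\int P'f$ term in the integration-by-parts formula automatically reproduces $-\int fg$, while the boundary contributions at $a$, $x^-$, $x^+$, $b$ combine, via the prescribed jump at $t=x$, into the three-point combination of $f(a)$, $f(x)$, $f(b)$.

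I would then compute $\gamma\int_a^b P(t,x)\,dt$ directly and check that it matches the $\gamma$-correction term, call it $M(x)$, appearing inside the absolute value. Subtraction yields the perturbation identity
\[
L(x)-\gamma M(x)=\int_a^b P(t,x)\bigl(f'(t)-\gamma\bigr)\,dt.
\]
Taking absolute values and using the two-sided bound $0\le f'(t)-\gamma\le S-\gamma$, with $S:=\sup_{[a,b]}f'$, gives
\[
\bigl|L(x)-\gamma M(x)\bigr|\le(S-\gamma)\int_a^b|P(t,x)|\,dt\le(S-\gamma)(b-a)\,\sup_{t\in[a,b]}|P(t,x)|.
\]

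The remaining task is to evaluate $\sup_t|P(t,x)|$. Because $P$ is a translate of $h$ on each subinterval and $h$ is nondecreasing (as $g\ge 0$), the supremum of $|P|$ on each piece is attained at one of its endpoints, so $\sup_{[a,b]}|P|$ is attained in $\{a,x,b\}$. A short case analysis on $k\le\tfrac12$ versus $k>\tfrac12$ determines which of the prefactors $(1-k)$ or $k$ comes out in front, producing the two cases in the stated bound. To recover the bracket $\tfrac12\int_a^b g+\bigl|h(x)-\tfrac{h(a)+h(b)}{2}\bigr|$, I would apply the identity $\max(|u|,|v|)=\tfrac12|u+v|+\tfrac12|u-v|$ to the two piecewise suprema, chosen so that $u+v$ collapses to $h(b)-h(a)=\int_a^b g$ and $u-v$ to $\pm 2\bigl(h(x)-\tfrac{h(a)+h(b)}{2}\bigr)$. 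The main obstacle is precisely this bookkeeping step: pinning down $A$ and $B$ cleanly and tracking the sign structure of $P$ near $t=x$ so that the two pieces' suprema align into the clean bracketed bound.
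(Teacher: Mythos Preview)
The paper does not prove this theorem at all: it is quoted in the Introduction as a known result of Liu \cite{LJ} to motivate the paper's own time-scale generalizations, and no proof is supplied anywhere in the manuscript. There is therefore nothing to compare your proposal against.

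That said, your outline is the standard and correct route to such a weighted three-point Ostrowski inequality: set up a piecewise Peano kernel $P(t,x)$ built from $h$, integrate by parts to obtain the Montgomery-type identity, subtract $\gamma\int_a^b P(t,x)\,dt$ to perturb $f'$ to $f'-\gamma$, and then estimate $\int_a^b|P(t,x)|\,dt$ by $(b-a)\sup_t|P(t,x)|$ using the monotonicity of $h$ and the identity $\max(|u|,|v|)=\tfrac12(|u+v|+|u-v|)$. This is exactly the method used in Liu's original paper \cite{LJ}. One small caution: the theorem as reproduced here leaves $S$ undefined; your identification $S=\sup_{[a,b]}f'$ is the intended one, and the bound $0\le f'-\gamma\le S-\gamma$ is what makes the estimate go through.
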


In order to unify the continuous and discrete calculus in a consistent manner, 
Hilger introduced in 1988 the theory of time scales \cites{BookTS:2001,Hilger}. 
Since the advent of this calculus, many researchers have been able to extend known 
classical integral inequalities to time scales. We refer the interested reader 
to papers \cites{BM1,BM2,Liu1,Liu2,Liu3,MR2816120,Nwaeze,Ozkan1,Ozkan2,XuFang}, 
books \cites{MR3307947,MR3445007,MR3526108}, and references therein.
In 2014, Liu et al. \cite{Liu4} obtained the following inequality on time scales.

\begin{theorem}[See \cite{Liu4}]
\label{introthm4}
Let $0\leq k\leq 1$, $g:[a,b]\to[0,\infty)$ be $rd$-continuous and positive, 
and $h:[a,b]\to\mathbb{R}$ be differentiable such that $h^\Delta(t)=g(t)$ on $[a,b]$. 
Let $a, b, t, x\in\mathbb{T}$, $a<b$, and $f:[a,b]\to\mathbb{R}$ be twice differentiable. 
Then, 
\begin{align*}
&\Bigg|(1-k)^2f(x)-\frac{1}{(\int_a^bg(t)\Delta t)^2}\bigg(\int_a^bS(x,t)\Delta t\bigg)\bigg(\int_a^bg(t)f^\Delta(\sigma(t))\Delta t\bigg)\\
&\quad +\frac{k}{(\int_a^bg(t)\Delta t)^2}\int_a^bS(x,t)\bigg(f^\Delta(a)\int_a^tg(s)\Delta s+f^\Delta(b)\int_t^bg(s)\Delta s\bigg)\Delta t\\
&\quad +\frac{k(1-k)}{(\int_a^bg(t)\Delta t)}\bigg(f(a)\int_a^xg(t)\Delta t+f(b)\int_x^bg(t)\Delta t\bigg)\\
&\quad -\frac{1-k}{\int_a^bg(t)\Delta t}\int_a^bg(t)f(\sigma(t))\Delta t\Bigg|\\
&\leq \frac{M}{(\int_a^bg(t)\Delta t)^2}\int_a^b\int_a^b|S(x,t)||S(t,s)|\Delta s\Delta t
\end{align*}
for all $x\in[a,b]$, where $\displaystyle M=\sup_{a<t<b}|f^{\Delta\Delta}(t)|<\infty$ and 
\begin{equation*}
S(x, t)=
\begin{cases}
h(t)-((1-k)h(a)+kh(x)),\quad  a\leq t<x,\\
h(x)-(kh(x)+(1-k)h(b)), \quad x\leq t\leq b.
\end{cases}
\end{equation*}
\end{theorem}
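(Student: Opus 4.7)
The plan is to first establish an integral identity that rewrites the quantity inside the absolute value on the left-hand side as
\[
\frac{1}{\bigl(\int_a^b g(t)\,\Delta t\bigr)^2}\int_a^b\!\int_a^b S(x,t)\,S(t,s)\,f^{\Delta\Delta}(s)\,\Delta s\,\Delta t,
\]
and then to obtain the inequality by taking absolute values, applying Fubini on time scales, and invoking the uniform bound $|f^{\Delta\Delta}(s)|\leq M$.

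The core of the argument is producing the identity above by two successive applications of time-scale integration by parts, in the form
\[
\int_\alpha^\beta u(s)\,v^\Delta(s)\,\Delta s=\bigl[u(s)v(s)\bigr]_\alpha^\beta-\int_\alpha^\beta u^\Delta(s)\,v(\sigma(s))\,\Delta s.
\]
First, for each fixed $t\in[a,b]$, I would compute the inner integral $\int_a^b S(t,s)\,f^{\Delta\Delta}(s)\,\Delta s$ by splitting at $s=t$ (the breakpoint of the piecewise kernel) and integrating by parts on each piece with $u(s)=S(t,s)$ and $v(s)=f^\Delta(s)$. Since $h^\Delta=g$, the derivative $u^\Delta$ reduces to $g$ on the relevant subintervals, so the interior contribution is a single integral against $g(s)f^\Delta(\sigma(s))$, while the boundary evaluations at $s=a$, $s=t$, $s=b$ give rise to the combinations of $f^\Delta(a)$ and $f^\Delta(b)$ appearing on the left. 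Second, I would multiply the resulting expression by $S(x,t)$, integrate in $t$, split at $t=x$, and apply integration by parts once more, now with $v(t)=f(t)$ and $u(t)=S(x,t)$. The interior contribution yields the term $(1-k)\int_a^b g(t)f(\sigma(t))\,\Delta t$, while the boundary evaluations at $t=a$, $t=x$, $t=b$ — combined with the jump of $S(x,\cdot)$ across $t=x$ — produce the $(1-k)^2 f(x)$ piece and the combination $k(1-k)\bigl[f(a)\int_a^x g+f(b)\int_x^b g\bigr]$ present in the statement. Dividing through by $\bigl(\int_a^b g(t)\,\Delta t\bigr)^2$ as a normalizing factor matches the identity.

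Once the identity is assembled, the final step is immediate: the triangle inequality together with $|f^{\Delta\Delta}|\leq M$ yields the stated upper bound $\tfrac{M}{(\int_a^b g\,\Delta t)^2}\int_a^b\!\int_a^b |S(x,t)||S(t,s)|\,\Delta s\,\Delta t$.

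\textbf{Main obstacle.} The principal difficulty is pure bookkeeping: both integration-by-parts steps are performed on a piecewise kernel, so the boundary evaluations at $s=t$ (inner step) and at $t=x$ (outer step), together with the jumps of $S(t,\cdot)$ and $S(x,\cdot)$ across these breakpoints, must be tracked precisely and shown to aggregate into exactly the five families of terms on the left. Verifying that the coefficients of $f(x)$, $f(a)$, $f(b)$, $f^\Delta(a)$, $f^\Delta(b)$, $\int g f^\Delta(\sigma)$, and $\int g f(\sigma)$ come out with the exact prefactors $(1-k)^2$, $k(1-k)$, $k$, and $1-k$ demands a careful algebraic reconciliation. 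After the identity is in hand, the final estimation is routine.
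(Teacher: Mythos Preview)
Your approach is correct and is essentially the one the paper uses for its own analogous Theorem~\ref{thm1} (the statement you were given is quoted from \cite{Liu4} without proof in this paper): two successive integrations by parts to produce an exact identity, followed by the triangle inequality together with the bound $|f^{\Delta\Delta}|\le M$. The only difference is packaging --- the paper wraps each integration-by-parts step into a single invocation of the Montgomery-type identity of Lemma~\ref{Wlem1}, applied once to $f$ and once to $f^{\Delta}$, and then substitutes the second into the first, whereas you propose to carry out the same computation directly on the double integral.
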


Recently, in 2016, by using a different weighted Peano kernel, 
Nwaeze obtained in \cite{Nwaeze} the following 
weighted Ostrowski type inequality.

\begin{theorem}[See \cite{Nwaeze}]
\label{thm:Nw:16}
Let $\nu:[a, b]\rightarrow [0,\infty)$ be $rd$-continuous and positive 
and $w:[a, b]\rightarrow \mathbb{R}$ be differentiable such that 
$w^{\Delta}(t)=\nu(t)$ on $[a, b]$. Suppose also that $a, b, s, t\in \mathbb{T}$, 
$a<b$, $f:[a, b]\rightarrow \mathbb{R}$ is differentiable, 
and $\psi$ is a function of $[0, 1]$ into $[0, 1]$. Then, 
\begin{align*}
\Bigg|\left[\dfrac{1+\psi(1-\lambda)-\psi(\lambda)}{2}f(t) 
+ \dfrac{\psi(\lambda)f(a)+\left(1-\psi(1-\lambda)\right)f(b)}{2}\right]\int_a^b \nu(t)\Delta t\\
-\int_a^b \nu(s)f(\sigma(s))\Delta s\Bigg|\leq M\int_a^b |K(s,t)|\Delta s,
\end{align*}
where
\begin{equation}
\label{WE}
K(s, t)=
\begin{cases}
w(s)-\left(w(a)+\psi(\lambda)\frac{w(b)-w(a)}{2}\right), \quad s\in[a, t),\\
w(s)-\left(w(a)+(1+\psi(1-\lambda))\frac{w(b)-w(a)}{2}\right), \quad s\in[t, b],
\end{cases}
\end{equation}
and $M=\sup\limits_{a<t<b}|f^{\Delta}(t)|<\infty$.
\end{theorem}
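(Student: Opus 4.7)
The plan is to recognize Theorem~\ref{thm:Nw:16} as the standard Peano-kernel Ostrowski recipe on time scales: show that $\int_a^b K(s,t)\,f^{\Delta}(s)\,\Delta s$ equals the quantity inside the absolute value, then estimate $|f^{\Delta}(s)|\le M$ and take moduli inside the integral.

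First I would split the integral along the piecewise definition of $K(s,t)$ in \eqref{WE}:
\begin{equation*}
\int_a^b K(s,t)\,f^{\Delta}(s)\,\Delta s
= \int_a^t \bigl(w(s)-A\bigr)f^{\Delta}(s)\,\Delta s
+ \int_t^b \bigl(w(s)-B\bigr)f^{\Delta}(s)\,\Delta s,
\end{equation*}
where, for brevity, $A=w(a)+\psi(\lambda)\tfrac{w(b)-w(a)}{2}$ and $B=w(a)+(1+\psi(1-\lambda))\tfrac{w(b)-w(a)}{2}$. On each piece the integrand has the form $u(s)f^{\Delta}(s)$ with $u^{\Delta}(s)=w^{\Delta}(s)=\nu(s)$, since $A$ and $B$ are constants in $s$. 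Integration by parts on time scales, namely $\int_{\alpha}^{\beta} u\,f^{\Delta}\,\Delta s = [uf]_{\alpha}^{\beta} - \int_{\alpha}^{\beta} u^{\Delta} f^{\sigma}\,\Delta s$, then produces boundary terms at $a$, $t$, $t$, $b$ together with a single remaining integral $-\int_a^b \nu(s)\,f(\sigma(s))\,\Delta s$.

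Next I would simplify the boundary terms. The two contributions at $s=t$ collapse into $(B-A)f(t) = \tfrac{w(b)-w(a)}{2}\,[1+\psi(1-\lambda)-\psi(\lambda)]\,f(t)$; the contribution at $s=a$ gives $\psi(\lambda)\tfrac{w(b)-w(a)}{2}f(a)$; and the contribution at $s=b$ gives $(1-\psi(1-\lambda))\tfrac{w(b)-w(a)}{2}f(b)$. Using the antiderivative identity $\int_a^b \nu(t)\,\Delta t = w(b)-w(a)$, these three boundary expressions combine into exactly the bracketed factor on the left-hand side of the statement, multiplied by $\int_a^b\nu(t)\,\Delta t$. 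This yields the identity
\begin{equation*}
\int_a^b K(s,t)\,f^{\Delta}(s)\,\Delta s
= \Bigl[\tfrac{1+\psi(1-\lambda)-\psi(\lambda)}{2}f(t)
+\tfrac{\psi(\lambda)f(a)+(1-\psi(1-\lambda))f(b)}{2}\Bigr]\!\!\int_a^b\!\!\nu(t)\,\Delta t
-\int_a^b\nu(s)f(\sigma(s))\,\Delta s.
\end{equation*}

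Finally, taking absolute values, pulling the modulus inside the $\Delta$-integral, and applying $|f^{\Delta}(s)|\le M$ gives the claimed bound $M\int_a^b|K(s,t)|\,\Delta s$. The main obstacle is purely bookkeeping: getting the four boundary terms in the right signs and recognizing that the $w$-affine constants $A$ and $B$ were engineered precisely so that the surviving evaluations at $a$, $t$, $b$ assemble into the desired $\psi$-weighted combination; the time-scales integration by parts itself is routine, and positivity of $\nu$ is only used implicitly through the fact that $w$ is a well-defined antiderivative so that $\int_a^b\nu\,\Delta t=w(b)-w(a)$.
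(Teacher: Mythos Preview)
Your argument is correct. Note, however, that the paper does not supply its own proof of this theorem: it is quoted from \cite{Nwaeze}. The identity you establish by splitting at $t$ and integrating by parts is exactly the content of Lemma~\ref{Wlem1}, which the paper likewise quotes from \cite{Nwaeze} and then uses as the starting point for its main results; the final estimate is just Theorem~\ref{Pthm2} together with $|f^{\Delta}|\le M$. So your route---integration-by-parts identity followed by the trivial bound---is precisely the intended one, and in effect you have reproduced the proof of Lemma~\ref{Wlem1} that the paper outsources.
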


Inspired by the ideas employed in \cites{Liu4,Nwaeze},
here we obtain generalized Ostrowski and Ostrowski--Gr\"uss inequalities 
on time scales via a parameter function. Our results are different 
from the ones given in \cite{Liu4} since we are using the generalized 
weighted Peano kernel \eqref{WE}. Furthermore, we apply our result 
to the continuous, discrete, and quantum cases, to obtain some interesting new inequalities. 
More corollaries are also obtained by considering different parameter and weight functions. 
In particular, we generalize and extend Theorem~\ref{DB} to time scales (see Remark~\ref{remDB}).

The paper is organized as follows. In Section~\ref{sec2},
we provide the reader with essentials on the calculus on time scales.  
Our results are then stated and proved in Section~\ref{sec3}.

% ------------------------------------------

\section{Preliminaries on time scales}
\label{sec2}

In this section, we briefly recall the theory of time scales. For further
details and proofs we refer the reader to Hilger's original work \cite{Hilger} 
and to the books \cites{BookTS:2001,BookTS:2003}.

\begin{definition}
A time scale is an arbitrary nonempty closed subset of the real numbers 
$\mathbb{R}$.
\end{definition}

Throughout this work, we assume $\mathbb{T}$ to be a time scale with 
the topology that is inherited from the standard topology on $\mathbb{R}$. 
It is also assumed throughout that in $\mathbb{T}$ the interval 
$\left[ a,b\right]$ means the set $\left\{ t\in \mathbb{T}\text{: }a\leq t
\leq b\right\}$ for points $a<b$ in $\mathbb{T}$. Since a time scale may not
be connected, we need the following concept of jump operators.

\begin{definition}
The forward and backward jump operators 
$\sigma ,\rho :\mathbb{T}\rightarrow \mathbb{T}$ 
are defined by
$\sigma (t)=\inf \left\{ s\in \mathbb{T} : s>t\right\}$ 
and $\rho (t)=\sup \left\{ s\in \mathbb{T} : s<t\right\}$,
respectively.
\end{definition}

The jump operators $\sigma $ and $\rho $ allow the classification 
of points in $\mathbb{T}$ as follows.

\begin{definition}
If $\sigma (t)>t$, then $t$ is right-scattered, 
while if $\rho(t)<t$, then we say that $t$ is left-scattered. 
Points that are simultaneously right-scattered and left-scattered 
are called isolated. If $\sigma(t)=t$, then $t$ is called right-dense, 
and if $\rho(t)=t$, then $t$ is left-dense. Points that 
are both right-dense and left-dense are said to be dense.
\end{definition}

\begin{definition}
The (forward) graininess function $\mu :\mathbb{T\rightarrow }\left[ 0,\infty \right)$
is given by $\mu (t)=\sigma(t)-t$, $t\in \mathbb{T}$. The set 
$\mathbb{T}^{\kappa}$ is defined as follows: if $\mathbb{T}$ has a left-scattered maximum 
$m$, then $\mathbb{T}^{\kappa}=\mathbb{T}-\left\{ m\right\} ;$ otherwise, 
$\mathbb{T}^{\kappa}=\mathbb{T}$.
\end{definition}

If $\mathbb{T}=\mathbb{R}$, then $\mu(t)\equiv0$; 
if $\mathbb{T}=\mathbb{Z}$, then we have $\mu (t)\equiv 1$.

\begin{definition}
Assume $f:\mathbb{T\rightarrow R}$ and fix $t\in \mathbb{T}^{\kappa}$.
Then the (delta) derivative $f^{\Delta }(t)\in \mathbb{R}$ at 
$t\in \mathbb{T}^{\kappa}$ is defined to be the number (provided it exists) 
with the property that given any $\epsilon >0$ there exists a neighborhood 
$U$ of $t$ such that
\begin{equation*}
\left\vert f(\sigma(t))-f(s)-f^{\Delta }(t)\left[\sigma(t)-s\right]
\right\vert \leq \epsilon \left\vert \sigma(t)-s\right\vert
\quad \forall s\in U.
\end{equation*}
\end{definition}

If $\mathbb{T=R}$, then $f^{\Delta }(t)=\frac{df(t)}{dt}$; 
if $\mathbb{T=Z}$, then $f^{\Delta }(t)=\Delta f(t)=f(t+1)-f(t)$.

\begin{theorem}
Assume $f,g:\mathbb{T\rightarrow R}$ are differentiable at $t\in \mathbb{T}^{\kappa}$. 
Then the product $fg:\mathbb{T\rightarrow R}$ is differentiable at $t$
with $\left( fg\right) ^{\Delta }(t)=f^{\Delta }(t)g(t)+f(\sigma(t))g^{\Delta}(t)$.
\end{theorem}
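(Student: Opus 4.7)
The plan is to work directly from the definition of the delta derivative: I want to show that for each $\varepsilon>0$ there is a neighborhood $U$ of $t$ on which
\[
\bigl|(fg)(\sigma(t))-(fg)(s)-\bigl[f^{\Delta}(t)g(t)+f(\sigma(t))g^{\Delta}(t)\bigr](\sigma(t)-s)\bigr|
\le \varepsilon\,|\sigma(t)-s|,
\]
so that $(fg)^{\Delta}(t)$ exists and has the claimed value.

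The key algebraic step is an add-and-subtract trick. I would insert $\pm f(\sigma(t))g(s)$ and also $\pm f^{\Delta}(t)g(t)(\sigma(t)-s)$ in order to split the left-hand side above as
\[
f(\sigma(t))\bigl[g(\sigma(t))-g(s)-g^{\Delta}(t)(\sigma(t)-s)\bigr]
+g(s)\bigl[f(\sigma(t))-f(s)-f^{\Delta}(t)(\sigma(t)-s)\bigr]
+f^{\Delta}(t)(\sigma(t)-s)\bigl[g(s)-g(t)\bigr].
\]
Each of these three pieces is now in a form where something small multiplies something controlled.

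Next, given $\varepsilon>0$, I would fix an auxiliary $\varepsilon^{*}>0$ (to be pinned down at the end) and apply the definition of $f^{\Delta}(t)$ and $g^{\Delta}(t)$ to obtain neighborhoods $U_{1}$ and $U_{2}$ of $t$ on which the bracketed quantities in the first two pieces are bounded by $\varepsilon^{*}|\sigma(t)-s|$. I would also invoke the standard fact (derivable directly from the definition, or quoted from the time-scales texts cited) that differentiability at $t$ implies continuity at $t$; this gives a neighborhood $U_{3}$ on which $|g(s)-g(t)|\le\varepsilon^{*}$ and a neighborhood $U_{4}$ on which $|g(s)|\le|g(t)|+1$. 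On $U=U_{1}\cap U_{2}\cap U_{3}\cap U_{4}$, combining the three pieces yields
\[
\bigl|(fg)(\sigma(t))-(fg)(s)-\bigl[f^{\Delta}(t)g(t)+f(\sigma(t))g^{\Delta}(t)\bigr](\sigma(t)-s)\bigr|
\le \varepsilon^{*}\bigl(|f(\sigma(t))|+|g(t)|+1+|f^{\Delta}(t)|\bigr)|\sigma(t)-s|.
\]
Finally, I would choose $\varepsilon^{*}=\varepsilon/(|f(\sigma(t))|+|g(t)|+1+|f^{\Delta}(t)|)$ to close the argument.

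The main obstacle is really just keeping the bookkeeping clean: the identity has to be split so that each error factor (the two bracketed differentiability remainders and the continuity remainder $g(s)-g(t)$) is multiplied by a quantity that is either a constant at $t$ or bounded near $t$. The subtle point is remembering to argue continuity of $g$ at $t$ from its differentiability there, since without that bound on $|g(s)-g(t)|$ the third piece would not shrink with $s\to t$. Everything else is routine manipulation with $\sigma(t)$ and $s$ and does not require any special structure of the time scale.
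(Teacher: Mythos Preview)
Your argument is correct and is in fact the standard $\varepsilon$-neighborhood proof of the delta product rule (essentially the one in Bohner--Peterson, \emph{Dynamic equations on time scales}, Theorem~1.20). The decomposition, the use of continuity of $g$ at $t$ (a consequence of differentiability), and the final choice of $\varepsilon^{*}$ are all clean and complete.

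There is nothing to compare against in the paper itself: this theorem appears in the preliminaries section as a quoted background result from the time-scales literature and is stated without proof. So your write-up supplies a proof where the paper simply cites one; it is the same proof the cited references give.
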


\begin{definition}
Function $f:\mathbb{T\rightarrow R}$ is said to be $rd$-continuous
if it is continuous at all right-dense points $t\in \mathbb{T}$ 
and its left-sided limits exist at all left-dense points $t\in \mathbb{T}$.
Then, we write: $f\in C_{rd}(\mathbb{T}$,$\mathbb{R})$.
\end{definition}

It turns out that every $rd$-continuous function has an anti-derivative
(see, e.g., Theorem 1.74 of \cite{BookTS:2003}).

\begin{definition}
\label{Pdef1}
Function $F:\mathbb{T\rightarrow R}$ is a delta anti-derivative of 
$f:\mathbb{T\rightarrow R}$ provided $F^{\Delta }(t)=f(t)$ for any $t\in
\mathbb{T}^{\kappa}$. In this case, one defines the $\Delta$-integral of $f$ by
\begin{equation*}
\int_{a}^{b}f(s)\Delta s:=F(b)-F(a)
\quad \text{for all} \quad a, b \in \mathbb{T}.
\end{equation*} 
\end{definition}

\begin{theorem}
Let $f,g$ be $rd$-continuous, $a,b,c\in \mathbb{T}$ 
and $\alpha ,\beta \in \mathbb{R}$. Then,
\begin{enumerate}
\item $\int_{a}^{b}\left[ \alpha f(t)+\beta g(t)\right] \Delta t
=\alpha \int_{a}^{b}f(t)\Delta t+\beta \int_{a}^{b}g(t)\Delta t$,

\item $\int_{a}^{b}f(t)\Delta t=-\int_{b}^{a}f(t)\Delta t$,

\item $\int_{a}^{b}f(t)\Delta t=\int_{a}^{c}f(t)\Delta t
+\int_{c}^{b}f(t)\Delta t$,

\item $\int_{a}^{b}f(t)g^{\Delta }(t)\Delta t=\left( fg\right)(b)
-\left(fg\right) (a)-\int_{a}^{b}f^{\Delta }(t)g\left(\sigma(t)\right)\Delta t$.
\end{enumerate}
\end{theorem}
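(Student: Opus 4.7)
The plan is to derive all four properties from Definition~\ref{Pdef1}, which supplies the fundamental identity $\int_a^b f(t)\Delta t = F(b)-F(a)$ for any delta antiderivative $F$ of $f$, together with the linearity of the delta derivative and the product rule recorded in the preceding theorem. Since $f$ and $g$ are $rd$-continuous, the antiderivatives used below exist by the result cited just before Definition~\ref{Pdef1}.

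For item~(1), I would fix antiderivatives $F$ of $f$ and $G$ of $g$; by linearity of the delta derivative, $\alpha F+\beta G$ is an antiderivative of $\alpha f+\beta g$, and applying Definition~\ref{Pdef1} to it rearranges at once to the claim. Items~(2) and~(3) are immediate from Definition~\ref{Pdef1}: with $F$ any antiderivative of $f$, item~(2) reads $F(b)-F(a)=-(F(a)-F(b))$, and item~(3) is the telescoping identity $F(b)-F(a)=[F(c)-F(a)]+[F(b)-F(c)]$.

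For item~(4), the integration by parts formula, the key move is to exchange the roles of $f$ and $g$ in the product rule from the preceding theorem, using the commutativity of ordinary multiplication, so as to obtain
\[
(fg)^{\Delta}(t)=f(t)g^{\Delta}(t)+f^{\Delta}(t)g(\sigma(t)).
\]
Integrating both sides from $a$ to $b$, applying Definition~\ref{Pdef1} to the left-hand side and the linearity established in item~(1) on the right, yields
\[
(fg)(b)-(fg)(a)=\int_a^b f(t)g^{\Delta}(t)\Delta t+\int_a^b f^{\Delta}(t)g(\sigma(t))\Delta t,
\]
and solving for $\int_a^b f(t)g^{\Delta}(t)\Delta t$ produces the stated identity.

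There is no substantive obstacle: all four parts are bookkeeping built on Definition~\ref{Pdef1} and the product rule. The only point worth checking is that in item~(4) the integrands $fg^{\Delta}$ and $f^{\Delta}g(\sigma(\cdot))$ are $rd$-continuous, so that the two $\Delta$-integrals genuinely exist; this is standard under the implicit regularity assumptions of the statement (namely, that $f^{\Delta}$ and $g^{\Delta}$ are themselves $rd$-continuous on $\mathbb{T}^{\kappa}$).
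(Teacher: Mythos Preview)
Your argument is correct and is exactly the standard derivation of these integration rules from the antiderivative definition and the product rule. Note, however, that the paper does not supply its own proof of this theorem: it appears in the preliminaries section as a background result quoted from the time scales literature (the books of Bohner and Peterson cited there), so there is nothing in the paper to compare your approach against. Your proof is precisely the one found in those references.
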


We use the following result to prove our
generalized weighted Ostrowski 
inequality on time scales.

\begin{theorem}
\label{Pthm2}
If $f$ is $\Delta$-integrable on $\left[ a,b\right]$, then so is 
$\left\vert f\right\vert$, and
\begin{equation*}
\left\vert \int_{a}^{b}f(t)\Delta t\right\vert \leq \int_{a}^{b}\left\vert
f(t)\right\vert \Delta t.
\end{equation*}
\end{theorem}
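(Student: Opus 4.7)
My plan is to reduce the statement to two standard ingredients: the preservation of $\Delta$-integrability under the absolute value, and the monotonicity of the $\Delta$-integral. For the first ingredient, I would exploit the fact recalled just before Definition~\ref{Pdef1}, namely that every $rd$-continuous function admits a delta antiderivative and is therefore $\Delta$-integrable. Since the map $x\mapsto |x|$ is continuous on $\mathbb{R}$, the composition $|f|$ is $rd$-continuous whenever $f$ is $rd$-continuous, which handles the core case relevant to the paper. For the more general notion of $\Delta$-integrability in the Riemann-sum sense on time scales, I would invoke a Darboux-type estimate: on any subinterval the oscillation of $|f|$ is bounded by the oscillation of $f$, so upper and lower Darboux sums for $|f|$ squeeze together as they do for $f$.

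Granting that $|f|$ is $\Delta$-integrable, the inequality is then a one-line consequence of the pointwise sandwich $-|f(t)|\leq f(t)\leq |f(t)|$ for every $t\in[a,b]$. Using linearity (item~1 of the previous theorem) together with monotonicity of the $\Delta$-integral yields
\[
-\int_a^b |f(t)|\,\Delta t \;\leq\; \int_a^b f(t)\,\Delta t \;\leq\; \int_a^b |f(t)|\,\Delta t,
\]
which is exactly the claimed $\left|\int_a^b f(t)\,\Delta t\right|\leq \int_a^b |f(t)|\,\Delta t$.

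The main obstacle, and the only step requiring genuine time-scale reasoning, is justifying monotonicity of the $\Delta$-integral, since Definition~\ref{Pdef1} only introduces the integral via antiderivatives and does not explicitly record sign preservation. I would establish it by proving that an antiderivative $G$ of a nonnegative $\Delta$-integrable function $g$ is nondecreasing on $[a,b]$: at right-scattered points $t$ one has $G(\sigma(t))-G(t)=\mu(t)g(t)\geq 0$ directly from the product/antiderivative identity, and at right-dense points one has $G^\Delta(t)=g(t)\geq 0$, which through a standard time-scales mean value inequality forces $G$ to be locally nondecreasing. Patching these local facts together across $[a,b]$ gives $G(b)\geq G(a)$, i.e., $\int_a^b g(t)\,\Delta t\geq 0$; applying this to $g=|f|-f$ and $g=|f|+f$ finishes the proof.
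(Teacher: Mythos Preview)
The paper does not prove Theorem~\ref{Pthm2}; it is stated in Section~\ref{sec2} as a known preliminary fact from the time-scales literature (see, e.g., \cite{BookTS:2001}) and is simply invoked later in the proof of Theorem~\ref{thm1}. So there is no ``paper's own proof'' to compare against.

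That said, your sketch is the standard textbook route and is essentially correct. The reduction to monotonicity via the pointwise sandwich $-|f|\le f\le |f|$ and linearity is exactly how the result is established in \cite{BookTS:2001}. Your identification of the only nontrivial step---sign preservation of the $\Delta$-integral under the antiderivative definition---is accurate, and the right-scattered/right-dense case split is the right mechanism. The one place that deserves a sharper citation is the ``patching these local facts together'' step: on a general time scale this is made rigorous by the \emph{induction principle} on time scales (Theorem~1.7 in \cite{BookTS:2001}), which is precisely designed to propagate a local monotonicity statement across the whole interval $[a,b]$. Naming that principle would close the only soft spot in your argument.
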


We also make use of the $h_{k}$ polynomials, $k\in \mathbb{N}_{0}$. 
They are defined as follows: $h_{0}(t,s)=1$ for all $s,t\in \mathbb{T}$ and then, 
recursively, by $h_{k+1}\left( t,s\right) 
=\displaystyle \int_{s}^{t}h_{k}\left( \tau ,s\right) \Delta \tau$, 
$s,t\in \mathbb{T}$.

% -----------------------------------------------------------------------

\section{Main results}
\label{sec3}

For the proof of our main results (Theorems~\ref{thm1} and \ref{thm2}), 
we use the following useful lemma.

\begin{lemma}[See \cite{Nwaeze}]
\label{Wlem1}
Let $\nu:[a, b]\rightarrow [0,\infty)$ be $rd$-continuous and positive 
and $w:[a, b]\rightarrow \mathbb{R}$ be delta-differentiable such that 
$w^{\Delta}(t )=\nu(t)$ on $[a, b]$. Moreover, suppose also that 
$a, b, s, t\in \mathbb{T}$, $a<b$, $f:[a, b]\rightarrow \mathbb{R}$ 
is delta-differentiable, and $\psi$ is a function of $[0, 1]$ into $[0, 1]$. 
Then the following equality holds:
\begin{multline*}
\left[\dfrac{1+\psi(1-\lambda)-\psi(\lambda)}{2}f(t) 
+ \dfrac{\psi(\lambda)f(a)+\left(1-\psi(1-\lambda)\right)f(b)}{2}\right]
\int_a^b \nu(t)\Delta t\\
=\int_a^b K(s,t)f^{\Delta}(s)\Delta s 
+ \int_a^b \nu(s)f(\sigma(s))\Delta s,
\end{multline*}
where $K(\cdot, \cdot)$ is given by \eqref{WE}.
\end{lemma}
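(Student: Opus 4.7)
The plan is to directly evaluate $\int_a^b K(s,t) f^{\Delta}(s)\Delta s$ by splitting at $s=t$ and applying delta integration by parts on each piece. Observe that on each of the two subintervals $[a,t)$ and $[t,b]$, the kernel $K(\cdot,t)$ has the form $w(s) - C_i$ for a constant $C_i$ (depending only on $a,b,\lambda,\psi,w$), so differentiating in the $s$ variable yields $K^{\Delta}(s,t) = w^{\Delta}(s) = \nu(s)$ on each piece. This is crucial because it immediately produces the $\int_a^b \nu(s)f(\sigma(s))\Delta s$ term on the right-hand side after applying integration by parts.

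Concretely, set
\begin{equation*}
A := w(a) + \psi(\lambda)\,\tfrac{w(b)-w(a)}{2}, \qquad B := w(a) + (1+\psi(1-\lambda))\,\tfrac{w(b)-w(a)}{2}.
\end{equation*}
First I would apply the integration-by-parts identity (item 4 of the preceding theorem) to $\int_a^t [w(s)-A]\,f^{\Delta}(s)\Delta s$ and $\int_t^b [w(s)-B]\,f^{\Delta}(s)\Delta s$, using $w^{\Delta}=\nu$ on both subintervals. Adding the two contributions, the boundary terms collapse into
\begin{equation*}
[w(t)-A]f(t) - [w(a)-A]f(a) + [w(b)-B]f(b) - [w(t)-B]f(t) - \int_a^b \nu(s)f(\sigma(s))\Delta s,
\end{equation*}
where the $f(t)$ coefficients combine to $B-A$.

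Next I would simplify the three coefficients of $f(t)$, $f(a)$, $f(b)$ using the identity $\int_a^b \nu(t)\Delta t = w(b)-w(a)$ (which follows from Definition~\ref{Pdef1}). A short calculation gives
\begin{equation*}
B-A = \tfrac{1+\psi(1-\lambda)-\psi(\lambda)}{2}\bigl(w(b)-w(a)\bigr), \quad A-w(a) = \tfrac{\psi(\lambda)}{2}\bigl(w(b)-w(a)\bigr),
\end{equation*}
\begin{equation*}
w(b)-B = \tfrac{1-\psi(1-\lambda)}{2}\bigl(w(b)-w(a)\bigr),
\end{equation*}
so that factoring out $\int_a^b \nu(t)\Delta t$ yields exactly the bracket on the left-hand side of the lemma. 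Rearranging produces the claimed equality.

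There is no real obstacle here beyond careful bookkeeping; the only subtle point is recognizing that the jump in $K(\cdot,t)$ at $s=t$ is precisely $B-A$, which after multiplying by $f(t)$ supplies the weighted value of $f$ at the interior point. Because the proof is entirely algebraic once integration by parts has been applied, no regularity assumption beyond delta-differentiability of $f$ and $w$ (already in the hypotheses) is needed.
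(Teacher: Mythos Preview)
Your argument is correct: splitting the integral at $s=t$, applying delta integration by parts on each piece (using $w^{\Delta}=\nu$), and then computing $B-A$, $A-w(a)$, $w(b)-B$ exactly as you do recovers the identity. Note, however, that the paper does not actually prove this lemma; it quotes it from \cite{Nwaeze}, so there is no in-paper proof to compare against. Your proof is the natural one and is essentially how the result is established in the cited source.
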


% ------------------------------------

\subsection{Generalized weighted Ostrowski inequality on time scales}

We now state and prove our first main result.

\begin{theorem}
\label{thm1}
Let $\nu:[a, b]\rightarrow [0,\infty)$ be $rd$-continuous and positive, 
and function $w:[a, b]\rightarrow \mathbb{R}$ be delta-differentiable such that 
$w^{\Delta}(t )=\nu(t)$ on $[a, b]$. Suppose also that $a, b, t, x\in \mathbb{T}$, 
$a<b$, $f:[a, b]\rightarrow \mathbb{R}$ is twice delta-differentiable, 
and $\psi$ is a function of $[0, 1]$ into $[0, 1]$. Then, 
the inequality
\begin{equation}
\label{IneqMR1}
\begin{split}
&\Bigg| \Phi^2(\lambda)f(x) - \frac{1}{\Big(\int_a^b \nu(t)\Delta t\Big)^2}
\Bigg(\int_a^b K(t, x)\Delta t\Bigg)\Bigg(\int_a^b \nu(s)f^{\Delta}(\sigma(s))\Delta s\Bigg)\\
&\quad +\dfrac{\psi(\lambda)f^{\Delta}(a)+\left(1-\psi(1-\lambda)\right)
f^{\Delta}(b)}{2\int_a^b \nu(t)\Delta t}\int_a^b K(t, x)\Delta t \\
&\quad - \frac{\Phi(\lambda)}{\int_a^b \nu(t)\Delta t}\int_a^b \nu(t)f(\sigma(t))\Delta t
+\dfrac{\psi(\lambda)f(a)+\left(1-\psi(1-\lambda)\right)f(b)}{2}\Phi(\lambda)\Bigg|\\
&\leq \frac{M}{\Big(\int_a^b \nu(t)\Delta t\Big)^2}\int_a^b\int_a^b |K(t, x)|| K(s, t)|\Delta s \Delta t
\end{split}
\end{equation}
holds for all $x\in [a, b]$ and $\lambda\in[0, 1]$, where
\begin{equation}
\label{eq:Phi}
\Phi(\lambda) =\dfrac{1+\psi(1-\lambda)-\psi(\lambda)}{2},
\end{equation}
$M=\sup\limits_{a<t<b}\Big|f^{\Delta\Delta}(t)\Big|<\infty$, 
and $K(\cdot, \cdot)$ is given by \eqref{WE}.
\end{theorem}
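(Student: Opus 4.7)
The plan is to derive the inequality by applying Lemma~\ref{Wlem1} twice---once to $f$ and once to its delta-derivative $f^\Delta$---and then to combine the resulting identities so that the entire left-hand side of \eqref{IneqMR1} emerges as a weighted double integral of $f^{\Delta\Delta}$. Bounding $|f^{\Delta\Delta}|\le M$ and invoking Theorem~\ref{Pthm2} will then produce the right-hand side of \eqref{IneqMR1}.

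First, I would apply Lemma~\ref{Wlem1} at the point $t=x$, divide by $I:=\int_a^b\nu(t)\Delta t$, and multiply through by $\Phi(\lambda)$. This gives the identity
\[
\Phi^2(\lambda)f(x) + \Phi(\lambda)\,\frac{\psi(\lambda)f(a)+(1-\psi(1-\lambda))f(b)}{2} - \frac{\Phi(\lambda)}{I}\int_a^b\nu(t)f(\sigma(t))\Delta t = \frac{\Phi(\lambda)}{I}\int_a^b K(t,x)f^\Delta(t)\Delta t,
\]
which already accounts for three of the five signed terms on the left of \eqref{IneqMR1}.

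Next, I would apply Lemma~\ref{Wlem1} a second time, now with $f$ replaced by $f^\Delta$ at a variable point $t\in[a,b]$. Multiplying that identity by $K(t,x)$, integrating in $t$ over $[a,b]$, and dividing by $I^2$ yields a second expression for $\frac{\Phi(\lambda)}{I}\int_a^b K(t,x)f^\Delta(t)\Delta t$, consisting of three pieces: the iterated integral $\frac{1}{I^2}\int_a^b\int_a^b K(t,x)K(s,t)f^{\Delta\Delta}(s)\Delta s\,\Delta t$, the product $\frac{1}{I^2}\bigl(\int_a^b K(t,x)\Delta t\bigr)\int_a^b\nu(s)f^\Delta(\sigma(s))\Delta s$, and the subtracted term $\frac{1}{I}\cdot\frac{\psi(\lambda)f^\Delta(a)+(1-\psi(1-\lambda))f^\Delta(b)}{2}\int_a^b K(t,x)\Delta t$. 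Equating the two expressions for $\frac{\Phi(\lambda)}{I}\int_a^b K(t,x)f^\Delta(t)\Delta t$ and transposing terms exhibits the entire left-hand side of \eqref{IneqMR1} (without absolute values) as equal to the iterated integral $\frac{1}{I^2}\int_a^b\int_a^b K(t,x)K(s,t)f^{\Delta\Delta}(s)\Delta s\,\Delta t$.

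Finally, I would take the modulus of both sides, apply Theorem~\ref{Pthm2} twice to move absolute values inside the iterated delta-integral, and use $|f^{\Delta\Delta}(s)|\le M$ to arrive at the desired bound. No new analytic estimate is required; the only delicate point is the bookkeeping needed to match the five signed terms on the left of \eqref{IneqMR1} with the contributions coming from the two applications of Lemma~\ref{Wlem1}, so the principal obstacle is simply keeping the signs and the factors of $\Phi(\lambda)$ and $I$ straight throughout the manipulation.
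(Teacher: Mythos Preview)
Your proposal is correct and follows essentially the same route as the paper: apply Lemma~\ref{Wlem1} to $f$ at $x$ and multiply by $\Phi(\lambda)$, apply Lemma~\ref{Wlem1} again to $f^{\Delta}$ at a running point $t$, combine so that the left-hand side of \eqref{IneqMR1} equals $\frac{1}{I^{2}}\int_a^b\int_a^b K(t,x)K(s,t)f^{\Delta\Delta}(s)\,\Delta s\,\Delta t$, and then bound via Theorem~\ref{Pthm2} and $|f^{\Delta\Delta}|\le M$. The only cosmetic difference is that the paper phrases the combination as ``substituting'' the expression for $\Phi(\lambda)f^{\Delta}(t)$ into the first identity, whereas you phrase it as equating two expressions for $\frac{\Phi(\lambda)}{I}\int_a^b K(t,x)f^{\Delta}(t)\,\Delta t$; the computations are identical.
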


\begin{proof}
With $\Phi(\lambda)$ given by \eqref{eq:Phi}, it follows 
from Lemma~\ref{Wlem1} that
\begin{multline}
\label{Ineq1}
\Phi(\lambda)f(x)
=\frac{1}{\int_a^b \nu(t)\Delta t}\int_a^b K(t, x)f^{\Delta}(t)\Delta t 
+ \frac{1}{\int_a^b \nu(t)\Delta t}\int_a^b \nu(t)f(\sigma(t))\Delta t \\
-\dfrac{\psi(\lambda)f(a)+\left(1-\psi(1-\lambda)\right)f(b)}{2}.
\end{multline}
From \eqref{Ineq1} we get
\begin{multline}
\label{Ineq2}
\Phi^2(\lambda)f(x)
=\frac{\Phi(\lambda)}{\int_a^b \nu(t)\Delta t}
\int_a^b K(t, x)f^{\Delta}(t)\Delta t 
+ \frac{\Phi(\lambda)}{\int_a^b \nu(t)\Delta t}\int_a^b \nu(t)f(\sigma(t))\Delta t \\
-\dfrac{\psi(\lambda)f(a)+\left(1-\psi(1-\lambda)\right)f(b)}{2}\Phi(\lambda)
\end{multline}
and 
\begin{multline}
\label{Ineq3}
\Phi(\lambda)f^{\Delta}(t)
=\frac{1}{\int_a^b \nu(t)\Delta t}\int_a^b K(s, t)f^{\Delta\Delta}(s)\Delta s 
+ \frac{1}{\int_a^b \nu(t)\Delta t}\int_a^b \nu(s)f^{\Delta}(\sigma(s))\Delta s \\
-\dfrac{\psi(\lambda)f^{\Delta}(a)+\left(1-\psi(1-\lambda)\right)f^{\Delta}(b)}{2}.
\end{multline}
Substituting \eqref{Ineq3} into \eqref{Ineq2} results to
\begin{equation}
\label{Ineq4}
\begin{split}
\Phi^2&(\lambda)f(x)
=\frac{1}{\int_a^b \nu(t)\Delta t}\int_a^b K(t, x)\Bigg[\frac{1}{\int_a^b \nu(t)\Delta t}
\int_a^b K(s, t)f^{\Delta\Delta}(s)\Delta s \\
&\quad + \frac{1}{\int_a^b \nu(t)\Delta t}\int_a^b \nu(s)f^{\Delta}(\sigma(s))\Delta s
-\dfrac{\psi(\lambda)f^{\Delta}(a)+\left(1-\psi(1-\lambda)\right)f^{\Delta}(b)}{2}\Bigg]\Delta t\\
&\quad + \frac{\Phi(\lambda)}{\int_a^b \nu(t)\Delta t}\int_a^b \nu(t)f(\sigma(t))\Delta t
-\dfrac{\psi(\lambda)f(a)+\left(1-\psi(1-\lambda)\right)f(b)}{2}\Phi(\lambda)\\
&= \frac{1}{\Big(\int_a^b \nu(t)\Delta t\Big)^2}\Bigg(\int_a^b K(t, x)\Delta t\Bigg)\Bigg(
\int_a^b \nu(s)f^{\Delta}(\sigma(s))\Delta s\Bigg)\\
&\quad +\frac{1}{\Big(\int_a^b \nu(t)\Delta t\Big)^2}\int_a^b\int_a^b K(t, x) 
K(s, t)f^{\Delta\Delta}(s)\Delta s \Delta t\\
\end{split}
\end{equation}
\begin{equation*}
\begin{split}
&\quad -\dfrac{\psi(\lambda)f^{\Delta}(a)+\left(1-\psi(1-\lambda)\right)f^{\Delta}(b)}{2
\int_a^b \nu(t)\Delta t}\int_a^b K(t, x)\Delta t \\
&\quad + \frac{\Phi(\lambda)}{\int_a^b \nu(t)\Delta t}\int_a^b \nu(t)f(\sigma(t))\Delta t
-\dfrac{\psi(\lambda)f(a)+\left(1-\psi(1-\lambda)\right)f(b)}{2}\Phi(\lambda).
\end{split}
\end{equation*}
The desired inequality \eqref{IneqMR1} is obtained
by rearranging \eqref{Ineq4} and applying Theorem~\ref{Pthm2}.
\end{proof}

\begin{remark}
\label{rem1} 
Let $w(t)=t$. Then
\begin{multline*} 
\int_a^b |K(s, t)|\Delta s 
=h_2\left(a, a+\psi(\lambda)\frac{b-a}{2} \right) 
+ h_2\left(t, a+\psi(\lambda)\frac{b-a}{2} \right) \\
+ h_2\left(t, a+(1+\psi(1-\lambda))\frac{b-a}{2} \right)
+ h_2\left(b, a+(1+\psi(1-\lambda))\frac{b-a}{2} \right)
\end{multline*}
and 
\begin{multline*}
\int_a^b K(s, t)\Delta s 
= h_2\left(t, a+\psi(\lambda)\frac{b-a}{2} \right)
-h_2\left(a, a+\psi(\lambda)\frac{b-a}{2} \right)\\ 
+ h_2\left(b, a+(1+\psi(1-\lambda))\frac{b-a}{2} \right)
-h_2\left(t, a+(1+\psi(1-\lambda))\frac{b-a}{2} \right)
\end{multline*}
hold for all $\lambda\in [0, 1]$ such that $a+\psi(\lambda)\frac{b-a}{2}$ 
and $a+(1+\psi(1-\lambda))\frac{b-a}{2}$ are in $\mathbb{T}$ 
and $t\in \left[a+\psi(\lambda)\frac{b-a}{2}, 
a+(1+\psi(1-\lambda))\frac{b-a}{2}\right]$.
\end{remark}

\begin{corollary}
\label{corthm1}
Let $a, b, t, x\in \mathbb{T}$, $a<b$, and $f:[a, b]\rightarrow \mathbb{R}$ 
be twice delta-differentiable. Then, for all $x\in [a, b]$, the following inequality 
holds for all $\lambda\in [0, 1]$ such that $a+\lambda\frac{b-a}{2}$ 
and $a+(2-\lambda)\frac{b-a}{2}$ are in $\mathbb{T}$ and 
$t\in \left[a+\lambda\frac{b-a}{2}, a+(2-\lambda)\frac{b-a}{2}\right]$:
\begin{equation*}
\begin{split}
&\Bigg| (\lambda^2 - 2\lambda +1)f(x) - \frac{\int_a^b f^{\Delta}(\sigma(s))
\Delta s}{(b-a)^2}\Bigg[ h_2\left(x, a+\lambda\frac{b-a}{2} \right)
-h_2\left(a, a+\lambda\frac{b-a}{2} \right) \\
&\quad +  h_2\left(b, a+(2-\lambda)\frac{b-a}{2} \right)
-h_2\left(x, a+(2-\lambda)\frac{b-a}{2} \right)\Bigg]\\
&\quad +\lambda\dfrac{f^{\Delta}(a)+f^{\Delta}(b)}{2(b-a)}\Bigg[ 
h_2\left(x, a+\lambda\frac{b-a}{2} \right)-h_2\left(a, a+\lambda\frac{b-a}{2} \right)\\
&\quad +  h_2\left(b, a+(2-\lambda)\frac{b-a}{2} \right)
-h_2\left(x, a+(2-\lambda)\frac{b-a}{2} \right)\Bigg] \\
&\quad - \frac{1-\lambda}{b-a}\int_a^b f(\sigma(t))\Delta t
+\lambda(1-\lambda)\dfrac{f(a)+f(b)}{2}\Bigg|\\
&\leq \frac{M}{(b-a)^2}\int_a^b|K(t, x)|\Bigg[h_2\left(a, a+\lambda\frac{b-a}{2} \right) 
+ h_2\left(t, a+\lambda\frac{b-a}{2} \right) \\
&\quad + h_2\left(t, a+(2-\lambda)\frac{b-a}{2} \right)
+h_2\left(b, a+(2-\lambda)\frac{b-a}{2} \right)\Bigg]\Delta t,
\end{split}
\end{equation*}
where $M=\sup\limits_{a<t<b}\Big|f^{\Delta\Delta}(t)\Big|<\infty,$ and 
\begin{equation*}
K(t, x)=
\begin{cases}
t-\left(a+\lambda\frac{b-a}{2}\right), ~~~~t\in[a, x),\\
t-\left(a+(2-\lambda)\frac{b-a}{2}\right), ~~~~t\in[x, b].
\end{cases}
\end{equation*}
\end{corollary}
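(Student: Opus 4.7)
The plan is to obtain Corollary~\ref{corthm1} as a direct specialization of Theorem~\ref{thm1} by making three concrete choices: take the weight to be $\nu(t)\equiv 1$ with antiderivative $w(t)=t$, and let the parameter function $\psi:[0,1]\to[0,1]$ be the identity $\psi(\lambda)=\lambda$. With these choices, $\int_a^b \nu(t)\Delta t = b-a$, so the factor $\bigl(\int_a^b \nu(t)\Delta t\bigr)^2$ in \eqref{IneqMR1} becomes $(b-a)^2$.

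Next I would compute the scalars appearing in Theorem~\ref{thm1}. Since $\psi(\lambda)=\lambda$ and $\psi(1-\lambda)=1-\lambda$, formula \eqref{eq:Phi} gives
\begin{equation*}
\Phi(\lambda)=\frac{1+(1-\lambda)-\lambda}{2}=1-\lambda,
\qquad \Phi^2(\lambda)=\lambda^2-2\lambda+1,
\end{equation*}
which matches the leading coefficient of $f(x)$ in the corollary. The averaged boundary term simplifies as
\begin{equation*}
\frac{\psi(\lambda)f(a)+(1-\psi(1-\lambda))f(b)}{2}=\lambda\,\frac{f(a)+f(b)}{2},
\end{equation*}
and multiplying by $\Phi(\lambda)=1-\lambda$ yields the $\lambda(1-\lambda)\tfrac{f(a)+f(b)}{2}$ term; the analogous identity for $f^\Delta$ yields the $\lambda\tfrac{f^\Delta(a)+f^\Delta(b)}{2}$ term. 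With $w(t)=t$, the kernel \eqref{WE} reduces at once to the piecewise-linear kernel $K(t,x)$ displayed in the corollary, since $w(a)+\psi(\lambda)\tfrac{w(b)-w(a)}{2}=a+\lambda\tfrac{b-a}{2}$ and $w(a)+(1+\psi(1-\lambda))\tfrac{w(b)-w(a)}{2}=a+(2-\lambda)\tfrac{b-a}{2}$.

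It then remains to put the $\Delta$-integrals of $K$ into the $h_2$ form. For this I would apply the first identity of Remark~\ref{rem1} (with our choices of $\psi$ and $w$) to evaluate $\int_a^b K(t,x)\Delta t$ as
\begin{equation*}
h_2\!\left(x,\,a+\lambda\tfrac{b-a}{2}\right)-h_2\!\left(a,\,a+\lambda\tfrac{b-a}{2}\right)+h_2\!\left(b,\,a+(2-\lambda)\tfrac{b-a}{2}\right)-h_2\!\left(x,\,a+(2-\lambda)\tfrac{b-a}{2}\right),
\end{equation*}
which is exactly the bracketed expression that multiplies both $\int_a^b f^\Delta(\sigma(s))\Delta s/(b-a)^2$ and $\tfrac{f^\Delta(a)+f^\Delta(b)}{2(b-a)}$ in the corollary. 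For the right-hand side, I would write the double integral as
\begin{equation*}
\int_a^b\int_a^b |K(t,x)|\,|K(s,t)|\,\Delta s\,\Delta t=\int_a^b |K(t,x)|\left(\int_a^b |K(s,t)|\,\Delta s\right)\Delta t
\end{equation*}
and insert the second identity of Remark~\ref{rem1} for the inner integral, obtaining exactly the four-term $h_2$ bracket in the bound.

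The substantive content of the proof is already in Theorem~\ref{thm1}; the only real work here is careful bookkeeping, and the one step that needs attention is verifying the applicability of Remark~\ref{rem1}: this requires the hypotheses $a+\lambda\tfrac{b-a}{2},\,a+(2-\lambda)\tfrac{b-a}{2}\in\mathbb{T}$ and $t\in[a+\lambda\tfrac{b-a}{2},\,a+(2-\lambda)\tfrac{b-a}{2}]$, which are precisely the constraints imposed in the statement of the corollary. Once these are in force, Theorem~\ref{thm1} combined with Remark~\ref{rem1} yields the claimed inequality after a purely algebraic rearrangement.
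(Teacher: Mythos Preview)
Your proposal is correct and follows exactly the paper's approach: specialize Theorem~\ref{thm1} with $\nu\equiv 1$, $w(t)=t$, $\psi(\lambda)=\lambda$, and invoke Remark~\ref{rem1} to convert the integrals of $K$ into $h_2$-expressions. One trivial slip: you have interchanged which identity in Remark~\ref{rem1} is ``first'' and which is ``second'' (the absolute-value formula is displayed first there), but you apply each in the right place, so the argument is unaffected.
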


\begin{proof}
Let $w(t)=t$ and $\psi(\lambda)=\lambda$.
The result follows from Theorem~\ref{thm1} 
by using Remark~\ref{rem1}.
\end{proof}

As a particular case of Corollary~\ref{corthm1}, 
we obtain an extension of Theorem~\ref{DB} 
to an arbitrary time scale $\mathbb{T}$. 

\begin{corollary}
\label{corDB}
Let $a, b, t, x\in \mathbb{T}$, $a<b$, and $f:[a, b]\rightarrow \mathbb{R}$ 
be twice delta-differentiable. Then, the inequality 
\begin{multline*}
\Bigg| f(x) - \frac{\int_a^b f^{\Delta}(\sigma(s))\Delta s}{(b-a)^2}
\Big[ h_2(x, a)-h_2(x, b)\Big]- \frac{1}{b-a}\int_a^b f(\sigma(t))\Delta t\Bigg|\\
\leq \frac{M}{(b-a)^2}\int_a^b|K(t, x)|\Big[ h_2(t, a)+ h_2(t, b)\Big]\Delta t
\end{multline*}
holds for all $x\in [a, b]$,
where $M=\sup\limits_{a<t<b}\Big|f^{\Delta\Delta}(t)\Big|<\infty$ and 
$K(t, x)=
\begin{cases}
t-a, & t\in[a, x),\\
t-b, & t\in[x, b].
\end{cases}$
\end{corollary}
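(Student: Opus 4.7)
The plan is to derive Corollary~\ref{corDB} as the $\lambda = 0$ specialization of Corollary~\ref{corthm1}; no new estimation is needed, only routine bookkeeping on a limiting value of the parameter. First I would check that the admissibility conditions of Corollary~\ref{corthm1} are preserved: the auxiliary points $a + \lambda\frac{b-a}{2}$ and $a + (2-\lambda)\frac{b-a}{2}$ collapse to $a$ and $b$ respectively, which lie in $\mathbb{T}$ by hypothesis, and the constraint $t \in \left[a+\lambda\frac{b-a}{2},\, a+(2-\lambda)\frac{b-a}{2}\right]$ becomes $t \in [a,b]$, which is automatic for the integration variables.

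Next I would propagate $\lambda = 0$ through the inequality term by term. The coefficient $\lambda^2 - 2\lambda + 1$ of $f(x)$ becomes $1$. Every summand carrying an explicit factor of $\lambda$ drops out, namely the expression $\lambda\,\frac{f^{\Delta}(a)+f^{\Delta}(b)}{2(b-a)}[\,\cdots\,]$ on the left-hand side and the correction $\lambda(1-\lambda)\frac{f(a)+f(b)}{2}$. The factor $\frac{1-\lambda}{b-a}$ in front of $\int_a^b f(\sigma(t))\Delta t$ reduces to $\frac{1}{b-a}$, producing the integral mean that appears in Corollary~\ref{corDB}.

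The only substantive observation needed is that the time-scale polynomials satisfy $h_2(s,s) = 0$ for every $s \in \mathbb{T}$, which is immediate from the recursion $h_{k+1}(t,s)=\int_{s}^{t}h_{k}(\tau,s)\Delta \tau$ applied at $t = s$. With $\lambda = 0$ the bracket multiplying $\int_a^b f^{\Delta}(\sigma(s))\Delta s$ is
\[
h_2(x, a) - h_2(a, a) + h_2(b, b) - h_2(x, b) = h_2(x,a) - h_2(x,b),
\]
and the bracket inside the right-hand side integral becomes $h_2(a,a) + h_2(t,a) + h_2(t,b) + h_2(b,b) = h_2(t,a) + h_2(t,b)$, matching Corollary~\ref{corDB} exactly.

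Finally, with $\lambda = 0$ the kernel $K(t,x)$ of Corollary~\ref{corthm1} reduces to $t-a$ for $t \in [a,x)$ and $t - b$ for $t \in [x,b]$, which is precisely the kernel stated in Corollary~\ref{corDB}. I do not expect any real obstacle: the argument is a direct specialization, and the one nontrivial ingredient is the diagonal vanishing $h_2(s,s) = 0$. If a potential pitfall exists, it would be ensuring that the parameter $\lambda = 0$ is allowed in the constraint $t \in [a+\lambda\frac{b-a}{2}, a+(2-\lambda)\frac{b-a}{2}]$, which, as noted above, is immediate.
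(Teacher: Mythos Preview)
Your proposal is correct and follows exactly the paper's approach: the paper's entire proof is ``Choose $\lambda=0$ in Corollary~\ref{corthm1},'' and you have simply spelled out the bookkeeping (the vanishing $h_2(s,s)=0$, the collapse of the auxiliary points to $a$ and $b$, and the reduction of the kernel) that makes this specialization work.
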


\begin{proof}
Choose $\lambda=0$ in Corollary~\ref{corthm1}. 
\end{proof}

\begin{remark}
\label{remDB}
Theorem~\ref{DB} is obtained from Corollary~\ref{corDB}
by choosing $\mathbb{T}=\mathbb{R}$.
\end{remark}

To the best of our knowledge, Theorem~\ref{thm1} is new even 
when we consider particular time scales as $\mathbb{T}=\mathbb{R}$,
$\mathbb{T}=\mathbb{Z}$ or $\mathbb{T}=q^{\mathbb{N}_0}$, $q>1$.

\begin{corollary}
\label{cor1}
Let $\nu:[a, b]\rightarrow [0,\infty)$ be continuous and positive 
and function $w:[a, b]\rightarrow \mathbb{R}$ be differentiable such that 
$w'(t)=\nu(t)$ on $[a, b]$. Suppose also that $a<b$, 
$f:[a, b]\rightarrow \mathbb{R}$ is twice differentiable, 
and $\psi$ is a function of $[0, 1]$ into $[0, 1]$. Then, 
the inequality
\begin{align*}
&\Bigg| \Phi^2(\lambda)f(x) - \frac{1}{\Big(
\int_a^b \nu(t)dt\Big)^2}\Bigg(\int_a^b K(t, x)dt\Bigg)\Bigg(\int_a^b \nu(s)f'(s)ds\Bigg)\\
&\quad +\dfrac{\psi(\lambda)f'(a)+\left(1-\psi(1-\lambda)\right)f'(b)}{2\int_a^b \nu(t)dt}\int_a^b K(t, x)dt\\
&- \frac{\Phi(\lambda)}{\int_a^b \nu(t)dt}\int_a^b \nu(t)f(t)dt+\dfrac{\psi(\lambda)f(a)+\left(1-\psi(1-\lambda)\right)f(b)}{2}\Phi(\lambda)\Bigg|\\
&\leq \frac{M}{\Big(\int_a^b \nu(t)dt\Big)^2}\int_a^b\int_a^b |K(t, x)|| K(s, t)|ds dt
\end{align*}
holds for all $x\in [a, b]$ and $\lambda\in[0, 1]$,
where $K(\cdot, \cdot)$ is given by \eqref{WE},
$\Phi(\lambda)$ by \eqref{eq:Phi}, and
$M=\sup\limits_{a<t<b}\Big|f''(t)\Big|<\infty$.
\end{corollary}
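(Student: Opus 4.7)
The plan is to obtain Corollary~\ref{cor1} as an immediate specialization of Theorem~\ref{thm1} to the time scale $\mathbb{T}=\mathbb{R}$. First I would recall the standard dictionary between time-scale calculus and classical calculus in this case: the graininess satisfies $\mu(t)\equiv 0$, the forward jump operator is the identity $\sigma(t)=t$, the delta derivative coincides with the ordinary derivative (so $f^{\Delta}=f'$ and $f^{\Delta\Delta}=f''$), rd-continuity reduces to continuity on $[a,b]$, and the delta integral $\int_a^b(\cdot)\,\Delta t$ becomes the ordinary Riemann integral $\int_a^b(\cdot)\,dt$.

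Next I would verify that under these identifications the hypotheses of Theorem~\ref{thm1} translate verbatim to those of Corollary~\ref{cor1}: $\nu$ continuous and positive, $w$ differentiable with $w'=\nu$, $f$ twice differentiable, and $\psi:[0,1]\to[0,1]$ as given. Crucially, the weighted Peano kernel $K(s,t)$ defined in \eqref{WE} depends only on $w$ and $\psi$, so its form is unchanged, and the quantity $\Phi(\lambda)$ in \eqref{eq:Phi} also remains as stated.

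Finally, I would substitute these identifications directly into inequality~\eqref{IneqMR1}. The two places requiring careful bookkeeping are the terms $\int_a^b \nu(s)f^{\Delta}(\sigma(s))\,\Delta s$ and $\int_a^b \nu(t)f(\sigma(t))\,\Delta t$, which collapse respectively to $\int_a^b \nu(s)f'(s)\,ds$ and $\int_a^b \nu(t)f(t)\,dt$, exactly matching the left-hand side of Corollary~\ref{cor1}. The boundary evaluations $f(a), f(b), f^{\Delta}(a), f^{\Delta}(b)$ simply become $f(a), f(b), f'(a), f'(b)$, and the constant $M=\sup_{a<t<b}|f^{\Delta\Delta}(t)|$ becomes $M=\sup_{a<t<b}|f''(t)|$.

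Since this is a pure translation between formalisms, there is no genuine obstacle; the only step requiring attention is the systematic replacement of $\sigma$-evaluations and $\Delta$-notation by their classical counterparts throughout every term of \eqref{IneqMR1}, after which Corollary~\ref{cor1} follows at once from Theorem~\ref{thm1}.
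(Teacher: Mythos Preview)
Your proposal is correct and matches the paper's own proof, which consists of the single line ``Choose $\mathbb{T}=\mathbb{R}$ in Theorem~\ref{thm1}.'' You have simply spelled out in detail the standard dictionary ($\sigma(t)=t$, $f^{\Delta}=f'$, $\Delta$-integral becomes Riemann integral, etc.) that the paper leaves implicit.
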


\begin{proof}
Choose $\mathbb{T}=\mathbb{R}$ in Theorem~\ref{thm1}.
\end{proof}

\begin{corollary}
\label{cor2}
Let $a, b \in \mathbb{Z}$, $a < b$, $\nu:\{a, \ldots, b\} \rightarrow [0,\infty)$ 
be positive and function $w:\{a, \ldots, b\}\rightarrow \mathbb{R}$ be such that 
$\Delta w(t)=\nu(t)$, $t =a, \ldots, b-1$. Consider also given functions 
$f:\{a, \ldots, b\}\rightarrow \mathbb{R}$ and $\psi : [0, 1]\rightarrow [0, 1]$. 
Then, the inequality
\begin{align*}
&\Bigg| \Phi^2(\lambda)f(x) - \frac{1}{\Big(\sum_{t=a}^{b-1} 
\nu(t)\Big)^2}\Bigg(\sum_{t=a}^{b-1} K(t, x)\Bigg)\Bigg(\sum_{s=a}^{b-1} \nu(s)(f(s+2)-f(s+1))\Bigg)\\
&\quad +\dfrac{\psi(\lambda)(f(a+1)-f(a))+\left(1-\psi(1-\lambda)\right)(f(b+1)-f(b))}{2
\sum_{t=a}^{b-1} \nu(t)}\sum_{t=a}^{b-1} K(t, x) \\
&\quad - \frac{\Phi(\lambda)}{\sum_{t=a}^{b-1} \nu(t)}\sum_{t=a}^{b-1}
\nu(t)f(t+1)+\dfrac{\psi(\lambda)f(a)+\left(1-\psi(1-\lambda)\right)f(b)}{2}\Phi(\lambda)\Bigg|\\
&\leq \frac{M}{\Big(\sum_{t=a}^{b-1} \nu(t)\Big)^2}
\sum_{t=a}^{b-1}\sum_{s=a}^{b-1} |K(t, x)|| K(s, t)|
\end{align*}
holds for all $x\in \{a, \ldots, b\}$ and $\lambda\in[0, 1]$,
where $K(\cdot, \cdot)$ is given by \eqref{WE},
$\Phi(\lambda)$ by \eqref{eq:Phi}, and
$M=\sup\limits_{a<t<b}\Big|f(t+1)-2f(t)+f(t-1)\Big|<\infty$.
\end{corollary}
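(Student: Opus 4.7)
The plan is to apply Theorem~\ref{thm1} with $\mathbb{T}=\mathbb{Z}$ and translate every time-scale construct into its discrete analogue. First I would record the standard identifications: for $\mathbb{T}=\mathbb{Z}$ the forward jump is $\sigma(t)=t+1$, the delta derivative becomes the forward difference $f^{\Delta}(t)=\Delta f(t)=f(t+1)-f(t)$, the second delta derivative is $f^{\Delta\Delta}(t)=f(t+2)-2f(t+1)+f(t)$, and every delta integral collapses into a finite sum $\int_a^b g(t)\Delta t=\sum_{t=a}^{b-1}g(t)$. The $rd$-continuity hypothesis of Theorem~\ref{thm1} is automatic on $\mathbb{Z}$, and $w^{\Delta}=\nu$ translates exactly into the stated condition $\Delta w=\nu$ on $\{a,\dots,b-1\}$.

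Next I would substitute these identifications directly into the master inequality \eqref{IneqMR1}, one term at a time. In particular, $\int_a^b\nu(t)\Delta t$ becomes $\sum_{t=a}^{b-1}\nu(t)$; the weight-kernel integral $\int_a^b K(t,x)\Delta t$ becomes $\sum_{t=a}^{b-1}K(t,x)$; the quantity $\int_a^b\nu(s)f^{\Delta}(\sigma(s))\Delta s$ becomes $\sum_{s=a}^{b-1}\nu(s)(f(s+2)-f(s+1))$; the boundary combination $\psi(\lambda)f^{\Delta}(a)+(1-\psi(1-\lambda))f^{\Delta}(b)$ becomes $\psi(\lambda)(f(a+1)-f(a))+(1-\psi(1-\lambda))(f(b+1)-f(b))$; the term $\int_a^b\nu(t)f(\sigma(t))\Delta t$ becomes $\sum_{t=a}^{b-1}\nu(t)f(t+1)$; and the right-hand double integral turns into the double sum $\sum_{t=a}^{b-1}\sum_{s=a}^{b-1}|K(t,x)||K(s,t)|$ with the very same kernel $K$ from \eqref{WE}. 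The constant $M=\sup_{a<t<b}|f^{\Delta\Delta}(t)|$ then reads as the prescribed supremum of the second forward difference.

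Since no additional analytic work is required beyond these substitutions, I do not expect any genuine obstacle: the corollary is a pure specialization. The only care to take is bookkeeping: ensuring that each summation index respects the half-open convention $\int_a^b g\,\Delta t=\sum_{t=a}^{b-1}g(t)$ coming from Definition~\ref{Pdef1} on $\mathbb{Z}$, and that the shifts in the arguments of $f$ (via $\sigma$) are tracked consistently through the substitution. Once these translations are carried out, the inequality in the corollary is exactly \eqref{IneqMR1} read on $\mathbb{Z}$, so the proof reduces to a one-line invocation of Theorem~\ref{thm1} at $\mathbb{T}=\mathbb{Z}$.
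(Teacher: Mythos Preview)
Your proposal is correct and follows exactly the paper's own approach, which is simply to specialize Theorem~\ref{thm1} to the time scale $\mathbb{T}=\mathbb{Z}$; the paper's proof is the one-line statement ``Choose $\mathbb{T}=\mathbb{Z}$ in Theorem~\ref{thm1}.'' Your detailed bookkeeping of the discrete translations ($\sigma(t)=t+1$, $\Delta$-integrals become sums over $\{a,\dots,b-1\}$, delta derivatives become forward differences) is a faithful expansion of that specialization.
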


\begin{proof}
Choose $\mathbb{T}=\mathbb{Z}$ in Theorem~\ref{thm1}.
\end{proof}

An interesting (quantum) calculus is obtained by choosing 
$\mathbb{T}=q^{\mathbb{N}_0}$ with $q>1$ \cite{MR1865777}. 
In this case, $\sigma(t)=qt$ and $f^{\Delta}(t)=D_{q}f(t):=\frac{f(qt)-f(t)}{(q-1)t}$. 
The corresponding integral is known in the literature 
as the $d_q t$ integral \cite{MR1865777}.

\begin{corollary}
\label{Ineqcor3}
Let $m, n \in \mathbb{N}$ with $m<n$,
$\nu:[q^m, q^n]\rightarrow [0,\infty)$ be positive, 
and function $w:[q^m, q^n]\rightarrow \mathbb{R}$ be such that 
$D_{q}w(t) = \nu(t)$ on $[q^m, q^n]$.
Consider also functions $f:[q^m, q^n]\rightarrow \mathbb{R}$  
and $\psi : [0, 1] \rightarrow [0, 1]$. Then, the inequality
\begin{align*}
&\Bigg| \Phi^2(\lambda)f(x) - \frac{1}{\Big(\int_{q^m}^{q^n} 
\nu(t)d_q t\Big)^2}\Bigg(\sum_{j=m}^{n-1} K(q^j, x)\Bigg)\Bigg(
\int_{q^m}^{q^n} \nu(s)\frac{f(q^2s)-f(qs)}{(q-1)qs}d_q s\Bigg)\\
&\quad +\dfrac{q^n\psi(\lambda)\big[f(q^{m+1})-f(q^m)\big]+q^m\left(1
-\psi(1-\lambda)\right)\big[f(q^{n+1})-f(q^n)\big]}{2q^{m+n}(q-1)
\int_{q^m}^{q^n} \nu(t)d_q t}\int_a^b K(t, x)\Delta t \\
&\quad - \frac{\Phi(\lambda)}{\int_{q^m}^{q^n} \nu(t)d_q t}
\int_{q^m}^{q^n} \nu(t)f(qt) d_qt +\dfrac{\psi(\lambda)f(q^m)
+\left(1-\psi(1-\lambda)\right)f(q^n)}{2}\Phi(\lambda)\Bigg|\\
&\leq \frac{M}{\Big(\int_{q^m}^{q^n} \nu(t)d_q t\Big)^2}
\sum_{j=m}^{n-1}\sum_{i=m}^{n-1} |K(q^j, x)||K(q^i, q^j)|
\end{align*}
holds for all $x\in [q^m, q^n]$ and $\lambda\in[0, 1]$, where 
$$
M=\sup\limits_{q^m<t<q^n}
\Big|\frac{f(q^2t)-(q+1)f(qt)+qf(t)}{q(q-1)^2t^2}\Big|<\infty,
$$
\begin{equation}
\label{eq:K:quantum}
K(q^j, x)=
\begin{cases}
w(q^j)-\left(w(q^m)+\psi(\lambda)\frac{w(q^n)-w(q^m)}{2}\right),
\quad q^j\in[q^m, x),\\
w(q^j)-\left(w(q^m)+(1+\psi(1-\lambda))\frac{w(q^n)-w(q^m)}{2}\right), 
\quad q^j\in[x, q^n],
\end{cases}
\end{equation}
and $\Phi(\lambda)$ is given by \eqref{eq:Phi}.
\end{corollary}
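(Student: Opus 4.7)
The plan is to apply Theorem~\ref{thm1} to the particular time scale $\mathbb{T}=q^{\mathbb{N}_0}$ with $q>1$, identifying $a=q^m$ and $b=q^n$, and then to rewrite every $\Delta$-operation appearing in the resulting inequality in terms of its quantum-calculus counterpart. The translation dictionary to be used is $\sigma(t)=qt$, the first delta derivative $f^\Delta(t)=D_q f(t)=\frac{f(qt)-f(t)}{(q-1)t}$, the second delta derivative $f^{\Delta\Delta}(t)=\frac{f(q^2 t)-(q+1)f(qt)+qf(t)}{q(q-1)^2 t^2}$ (from which the stated form of $M$ is immediate), and the coincidence of the $\Delta$-integral with the Jackson $d_q t$-integral on $q^{\mathbb{N}_0}$, namely $\int_{q^m}^{q^n} g(t)\Delta t=(q-1)\sum_{j=m}^{n-1} q^j g(q^j)$.

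First I would observe that the hypotheses of Theorem~\ref{thm1} are automatic in this setting: every point of $q^{\mathbb{N}_0}$ is isolated, so positivity of $\nu$ on $[q^m,q^n]$ already gives $rd$-continuity, and the twofold delta differentiability of $f$ reduces to the pointwise existence of $D_q f$ and $D_q^2 f$, which requires nothing beyond a finite definition of $f$ on the grid. The kernel $K$ from \eqref{WE} then specializes, on the grid points $q^j\in[q^m,q^n]$, to the piecewise expression \eqref{eq:K:quantum}.

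Next I would translate the ingredients of \eqref{IneqMR1} term by term. The factor $\int_a^b\nu(s)f^\Delta(\sigma(s))\Delta s$ becomes $\int_{q^m}^{q^n}\nu(s)\frac{f(q^2 s)-f(qs)}{(q-1)qs}\,d_q s$. In the boundary correction, $f^\Delta(q^m)=\frac{f(q^{m+1})-f(q^m)}{(q-1)q^m}$ and $f^\Delta(q^n)=\frac{f(q^{n+1})-f(q^n)}{(q-1)q^n}$; placing these over the common denominator $2q^{m+n}(q-1)\int_{q^m}^{q^n}\nu(t)\,d_q t$ reproduces exactly the coefficient displayed in the corollary. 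The factor $\int_a^b\nu(t)f(\sigma(t))\Delta t$ becomes $\int_{q^m}^{q^n}\nu(t)f(qt)\,d_q t$, and the pointwise values $f(x)$, $f(a)$, $f(b)$ are unchanged.

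Finally, the remaining single and double $\Delta$-integrals of the kernel $K$ convert to finite sums via the Jackson-sum formula, producing the displayed $\sum_{j=m}^{n-1}K(q^j,x)$ on the left and $\sum_{j=m}^{n-1}\sum_{i=m}^{n-1}|K(q^j,x)|\,|K(q^i,q^j)|$ on the right. The bound on $|f^{\Delta\Delta}|$ becomes the stated supremum of the quantum second-difference quotient. I expect the main obstacle to be purely bookkeeping: tracking the weighting factors $q^j$ and $(q-1)$ arising from the Jackson representation of each integral and ensuring that they are absorbed consistently into the corresponding terms, after which the inequality is a direct transcription of Theorem~\ref{thm1} combined with Theorem~\ref{Pthm2}, with no further analytic content required.
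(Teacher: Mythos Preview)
Your proposal is correct and follows essentially the same approach as the paper: the paper's proof consists of the single sentence ``Choose $\mathbb{T}=q^{\mathbb{N}_0}$, $q>1$, and $a=q^m$ and $b=q^n$, $m<n$, in Theorem~\ref{thm1},'' and your argument is a more detailed elaboration of exactly that specialization, supplying the quantum-calculus dictionary for $\sigma$, $f^\Delta$, $f^{\Delta\Delta}$, and the Jackson integral that the paper leaves implicit.
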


\begin{proof}
Choose $\mathbb{T}=q^{\mathbb{N}_0}$, $q>1$, 
and $a=q^m$ and $b=q^n$, $m<n$, in Theorem~\ref{thm1}.
\end{proof}

% -------------------------------------------

\subsection{Generalized weighted Ostrowski--Gr\"uss inequality on time scales}

Follows the second main result of our paper.

\begin{theorem}
\label{thm2}
Let $\nu:[a, b]\rightarrow [0,\infty)$ be $rd$-continuous and positive, 
and function $w:[a, b]\rightarrow \mathbb{R}$ be delta-differentiable such that 
$w^{\Delta}(t )=\nu(t)$ on $[a, b]$. Suppose also that $a, b, t, x\in \mathbb{T}$, 
$a<b$, $f:[a, b]\rightarrow \mathbb{R}$ is delta-differentiable 
and $\psi$ is a function of $[0, 1]$ into $[0, 1]$. 
Then, the inequality
\begin{equation}
\label{IneqMR2}
\begin{split}
&\Bigg|\left[\dfrac{1+\psi(1-\lambda)-\psi(\lambda)}{2(b-a)}f(x) 
+ \dfrac{\psi(\lambda)f(a)+\left(1-\psi(1-\lambda)\right)f(b)}{2(b-a)}\right]
\int_a^b \nu(t)\Delta t\\
& \quad - \frac{1}{b-a}\int_a^b \nu(t)f(\sigma(t))\Delta t 
-\Bigg(\dfrac{f(b)-f(a)}{(b-a)^2}\int_a^b K(t, x)\Delta t \Bigg)\Bigg|\\
&\leq \Bigg[\dfrac{1}{b-a}\int_a^b K^2(t, x)\Delta t 
- \Bigg(\dfrac{1}{b-a}\int_a^b K(t, x)\Delta t \Bigg)^2\Bigg]^\frac{1}{2}\\
&\quad \times\Bigg[\dfrac{1}{b-a}\int_a^b \Big(f^{\Delta}(t)\Big)^2\Delta t 
- \Bigg(\dfrac{1}{b-a}\int_a^b f^{\Delta}(t)\Delta t \Bigg)^2\Bigg]^\frac{1}{2}
\end{split}
\end{equation}
holds for all $ x\in [a, b]$ and $\lambda\in[0, 1]$,  
where $K(\cdot,\cdot)$ is defined as in \eqref{WE}.
\end{theorem}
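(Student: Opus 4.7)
The plan is to collapse the left-hand side of \eqref{IneqMR2} into a Chebyshev-type functional in the pair $(K(\cdot,x),\,f^{\Delta})$ and then apply the Cauchy--Schwarz inequality on time scales. First, I would invoke Lemma~\ref{Wlem1}, which expresses the bracketed combination
\[
\Bigl[\Phi(\lambda)f(x)+\tfrac{1}{2}\bigl(\psi(\lambda)f(a)+(1-\psi(1-\lambda))f(b)\bigr)\Bigr]\int_a^b \nu(t)\Delta t
\]
as $\int_a^b K(t,x)f^{\Delta}(t)\Delta t+\int_a^b \nu(t)f(\sigma(t))\Delta t$. Dividing through by $b-a$ and moving the integral-mean term $\tfrac{1}{b-a}\int_a^b \nu(t)f(\sigma(t))\Delta t$ to the same side, the quantity inside the absolute value on the left of \eqref{IneqMR2} reduces to
\[
\frac{1}{b-a}\int_a^b K(t,x)f^{\Delta}(t)\Delta t \;-\;\frac{f(b)-f(a)}{(b-a)^2}\int_a^b K(t,x)\Delta t.
\]

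Next, using the fundamental identity $\int_a^b f^{\Delta}(t)\Delta t = f(b)-f(a)$, I would recognize the second term as the product of the two $\Delta$-means $\tfrac{1}{b-a}\int_a^b K(t,x)\Delta t$ and $\tfrac{1}{b-a}\int_a^b f^{\Delta}(t)\Delta t$. The entire quantity to be bounded is therefore the Chebyshev functional $T(F,G):=\tfrac{1}{b-a}\int_a^b FG\,\Delta t-\bigl(\tfrac{1}{b-a}\int_a^b F\,\Delta t\bigr)\bigl(\tfrac{1}{b-a}\int_a^b G\,\Delta t\bigr)$ evaluated at $F(t)=K(t,x)$, $G(t)=f^{\Delta}(t)$. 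The estimate is then completed by the time-scales Korkine identity
\[
T(F,G)=\frac{1}{2(b-a)^2}\int_a^b\!\!\int_a^b\bigl(F(t)-F(s)\bigr)\bigl(G(t)-G(s)\bigr)\,\Delta s\,\Delta t,
\]
which follows from linearity of the $\Delta$-integral together with Fubini on $\mathbb{T}\times\mathbb{T}$, followed by a standard Cauchy--Schwarz estimate of the double $\Delta$-integral, giving $|T(F,G)|\le \sqrt{T(F,F)\,T(G,G)}$. Substituting $F=K(\cdot,x)$ and $G=f^{\Delta}$ produces exactly the two square-root factors on the right of \eqref{IneqMR2}.

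The main obstacle should be the algebraic reduction in the first step: matching the three terms coming out of Lemma~\ref{Wlem1} against the three terms on the left of \eqref{IneqMR2}, and verifying that the boundary correction $\tfrac{1}{2}(\psi(\lambda)f(a)+(1-\psi(1-\lambda))f(b))$, the factor $\Phi(\lambda)$, and the $\sigma$-shifted weighted integral collapse cleanly into the single term $\tfrac{1}{b-a}\int_a^b K(t,x)f^{\Delta}(t)\Delta t$. Once the Chebyshev form has been revealed, the Gr\"uss-type bound is essentially a routine Cauchy--Schwarz on the time scale, since $K(\cdot,x)$ is bounded and $rd$-continuous and $f^{\Delta}$ is $\Delta$-integrable under the stated hypotheses.
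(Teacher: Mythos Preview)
Your proposal is correct and follows essentially the same route as the paper: the paper also reduces the left-hand side via Lemma~\ref{Wlem1} to $\tfrac{1}{b-a}\int_a^b K(t,x)f^{\Delta}(t)\Delta t-\bigl(\tfrac{1}{b-a}\int_a^b K(t,x)\Delta t\bigr)\bigl(\tfrac{1}{b-a}\int_a^b f^{\Delta}(t)\Delta t\bigr)$, establishes the Korkine-type double-integral identities for this quantity and for the corresponding quadratic forms by direct expansion, and then applies the Cauchy--Schwarz inequality on time scales. The only difference is presentational: you name the Chebyshev functional and the Korkine identity abstractly, whereas the paper writes out the double-integral computations explicitly.
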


\begin{proof}
We start by making the following computations: 
\begin{equation*}
\begin{split}
&\int_a^b\int_a^b \Big(K(t, x) - K(s, x)\Big)\Big(f^{\Delta}(t) 
- f^{\Delta}(s)\Big)\Delta t\Delta s\\
&= (b-a)\int_a^b K(t, x)f^{\Delta}(t)\Delta t 
- \Bigg(\int_a^b K(t, x)\Delta t \Bigg)\Bigg(\int_a^bf^{\Delta}(s)\Delta s \Bigg)\\
&\quad - \Bigg(\int_a^b K(s, x)\Delta s \Bigg)\Bigg(\int_a^bf^{\Delta}(t)\Delta t \Bigg)
+ (b-a)\int_a^b K(s, x)f^{\Delta}(s)\Delta s\\
&= 2(b-a)\int_a^b K(t, x)f^{\Delta}(t)\Delta t 
- 2\Bigg(\int_a^b K(t, x)\Delta t \Bigg)\Bigg(\int_a^bf^{\Delta}(s)\Delta s \Bigg).
\end{split}
\end{equation*}
This implies that 
\begin{multline}
\label{Ineq5}
\dfrac{1}{b-a}\int_a^b K(t, x)f^{\Delta}(t)\Delta t 
- \Bigg(\dfrac{1}{b-a}\int_a^b K(t, x)\Delta t \Bigg)\Bigg(
\dfrac{1}{b-a}\int_a^bf^{\Delta}(s)\Delta s \Bigg)\\
=\dfrac{1}{2(b-a)^2}\int_a^b\int_a^b \Big(K(t, x) 
- K(s, x)\Big)\Big(f^{\Delta}(t) - f^{\Delta}(s)\Big)\Delta t\Delta s.
\end{multline}
Following the same process, one gets
\begin{multline}
\label{Ineq6}
\dfrac{1}{b-a}\int_a^b K^2(t, x)\Delta t 
- \Bigg(\dfrac{1}{b-a}\int_a^b K(t, x)\Delta t \Bigg)^2\\
=\dfrac{1}{2(b-a)^2}\int_a^b\int_a^b \Big(K(t, x) 
- K(s, x)\Big)^2\Delta t\Delta s
\end{multline}
and 
\begin{multline}
\label{Ineq7}
\dfrac{1}{b-a}\int_a^b \Big(f^{\Delta}(t)\Big)^2\Delta t
- \Bigg(\dfrac{1}{b-a}\int_a^b f^{\Delta}(t)\Delta t \Bigg)^2\\
=\dfrac{1}{2(b-a)^2}\int_a^b\int_a^b \Big(f^{\Delta}(t) 
- f^{\Delta}(s)\Big)^2\Delta t\Delta s.
\end{multline}
From Lemma~\ref{Wlem1}, we also get that
\begin{multline}
\label{Ineq8}
\left[\dfrac{1+\psi(1-\lambda)-\psi(\lambda)}{2}f(x) 
+ \dfrac{\psi(\lambda)f(a)+\left(1-\psi(1-\lambda)\right)f(b)}{2}\right]
\int_a^b \nu(t)\Delta t\\
- \int_a^b \nu(t)f(\sigma(t))\Delta t 
= \int_a^b K(t,x)f^{\Delta}(t)\Delta t.
\end{multline}
Using the Cauchy--Schwarz inequality on time scales 
(see \cite{MR1859660}), we get
\begin{equation}
\label{Ineq9}
\begin{split}
\Bigg|&\dfrac{1}{2(b-a)^2}\int_a^b\int_a^b \Big(K(t, x) 
- K(s, x)\Big)\Big(f^{\Delta}(t) - f^{\Delta}(s)\Big)\Delta t\Delta s\Bigg|\\
&\leq \Bigg[\dfrac{1}{2(b-a)^2}\int_a^b\int_a^b \Big(K(t, x) 
- K(s, x)\Big)^2\Delta t\Delta s\Bigg]^\frac{1}{2}\\
&\quad \times\Bigg[\dfrac{1}{2(b-a)^2}\int_a^b\int_a^b \Big(f^{\Delta}(t) 
- f^{\Delta}(s)\Big)^2\Delta t\Delta s\Bigg]^\frac{1}{2}.
\end{split}
\end{equation}
Inequality \eqref{IneqMR2} is achieved by applying 
\eqref{Ineq5}--\eqref{Ineq8} and Definition~\ref{Pdef1} to \eqref{Ineq9}.
\end{proof}

\begin{corollary}
\label{corMR2}
Let $\mathbb{T}$ be a time scale with 
$a, b\in \mathbb{T}$, $a<b$, and $f:[a, b]\rightarrow \mathbb{R}$ 
be delta-differentiable. Then the inequality 
\begin{align*}
&\Bigg|\left(\dfrac{1-\lambda}{b-a}f(x) +\lambda\dfrac{f(a)+f(b)}{2(b-a)}\right)
\int_a^b \big(\sigma(t)+t\big)\Delta t\\
& \qquad - \frac{1}{b-a}\int_a^b \big(\sigma(t)+t\big)f(\sigma(t))\Delta t 
-  \Bigg(\dfrac{f(b)-f(a)}{(b-a)^2}\int_a^b K(t, x)\Delta t \Bigg)\Bigg|\\
&\leq \Bigg[\dfrac{1}{b-a}\int_a^b K^2(t, x)\Delta t - \Bigg(\dfrac{1}{b-a}
\int_a^b K(t, x)\Delta t \Bigg)^2\Bigg]^\frac{1}{2}\\
&\qquad \times\Bigg[\dfrac{1}{b-a}\int_a^b \Big(f^{\Delta}(t)\Big)^2\Delta t 
- \Bigg(\dfrac{1}{b-a}\int_a^b f^{\Delta}(t)\Delta t \Bigg)^2\Bigg]^\frac{1}{2}
\end{align*}
holds for all $x\in [a, b]$ and $\lambda\in[0, 1]$,
where $K(\cdot,\cdot)$ is defined by
\begin{equation}
\label{eq:K:aftr}
K(t, x)=
\begin{cases}
t^2-a^2 -\frac{\lambda}{2}(b^2-a^2), \quad t\in[a, x),\\
t^2-a^2-\frac{2-\lambda}{2}(b^2-a^2), \quad t\in[x, b].
\end{cases}
\end{equation}
\end{corollary}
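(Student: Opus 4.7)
The plan is to derive Corollary~\ref{corMR2} as a direct specialization of Theorem~\ref{thm2} with the two choices $\psi(\lambda)=\lambda$ and $w(t)=t^{2}$, after which the claimed inequality should read off by pure substitution. Since every ingredient on both sides of \eqref{IneqMR2} is expressed in terms of $\psi$, $\nu$, $w$, and $K$, the task reduces to checking that each of them collapses to the matching quantity in the corollary.

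First, with $\psi(\lambda)=\lambda$, a one-line simplification gives $\frac{1+\psi(1-\lambda)-\psi(\lambda)}{2}=1-\lambda$ and $\psi(\lambda)f(a)+(1-\psi(1-\lambda))f(b)=\lambda(f(a)+f(b))$, which converts the first bracket in \eqref{IneqMR2} into the expression displayed in the corollary. Next, taking $w(t)=t^{2}$, the delta product rule applied to $t\cdot t$ yields $\nu(t)=w^{\Delta}(t)=t+\sigma(t)$, which is exactly the weight appearing inside the two integrals $\int_{a}^{b}(\sigma(t)+t)\Delta t$ and $\int_{a}^{b}(\sigma(t)+t)f(\sigma(t))\Delta t$ of the statement. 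Finally, substituting $w(t)=t^{2}$ and $\psi(\lambda)=\lambda$ into the generic Peano kernel \eqref{WE} produces precisely \eqref{eq:K:aftr}; since the right-hand side of \eqref{IneqMR2} depends only on $K(\cdot,x)$ and $f^{\Delta}$, it transfers verbatim.

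The only subtle point is the computation $(t^{2})^{\Delta}=t+\sigma(t)$: on an arbitrary time scale the classical identity $(t^{2})'=2t$ fails, and it is exactly the extra $\sigma(t)$-term that forces the corollary's weight to be $t+\sigma(t)$ rather than the familiar $2t$. One should also note the implicit positivity hypothesis $a\ge 0$ (with $\sigma(a)>0$ when $a=0$), which is needed so that $\nu(t)=t+\sigma(t)$ remains positive on $[a,b]$ as required by Theorem~\ref{thm2}. Beyond this, the proof is genuinely a one-liner: \emph{choose $\psi(\lambda)=\lambda$ and $w(t)=t^{2}$ in Theorem~\ref{thm2}, and the result follows}.
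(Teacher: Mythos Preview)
Your proposal is correct and follows essentially the same route as the paper: the paper's proof reads ``Let $\psi(\lambda)=\lambda$ and $w(t)=t^{2}+c$, $c\in\mathbb{R}$, in Theorem~\ref{thm2}; the result follows because $\nu(t)=\sigma(t)+t$.'' Your choice $w(t)=t^{2}$ is just the case $c=0$, and since the kernel \eqref{WE} depends on $w$ only through differences $w(s)-w(a)$ and $w(b)-w(a)$, the constant is immaterial; your added remark on the implicit positivity assumption $\nu(t)=t+\sigma(t)>0$ is a worthwhile observation that the paper omits.
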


\begin{proof}
Let $\psi(\lambda)=\lambda$ and $w(t)=t^2+c$, $c\in\mathbb{R}$,
in Theorem~\ref{thm2}. The result follows because $\nu(t)=\sigma(t)+t$ 
for $t\in [a, b]$. 
\end{proof}

\begin{corollary} 
Let $\mathbb{T}$ be a time scale with 
$a, b\in \mathbb{T}$, $a<b$, and $f:[a, b]\rightarrow \mathbb{R}$ 
be delta-differentiable. Then the inequality 
\begin{equation*}
\begin{split}
&\Bigg|\dfrac{1}{b-a}f(x)\int_a^b \big(\sigma(t)+t\big)\Delta t 
- \frac{1}{b-a}\int_a^b \big(\sigma(t)+t\big)f(\sigma(t))\Delta t \\
&\qquad -  \Bigg(\dfrac{f(b)-f(a)}{(b-a)^2}\int_a^b K(t, x)\Delta t \Bigg)\Bigg|\\
&\leq \Bigg[\dfrac{1}{b-a}\int_a^b K^2(t, x)\Delta t - \Bigg(\dfrac{1}{b-a}
\int_a^b K(t, x)\Delta t \Bigg)^2\Bigg]^\frac{1}{2}\\
&\qquad \times\Bigg[\dfrac{1}{b-a}\int_a^b \Big(f^{\Delta}(t)\Big)^2\Delta t 
- \Bigg(\dfrac{1}{b-a}\int_a^b f^{\Delta}(t)\Delta t \Bigg)^2\Bigg]^\frac{1}{2}
\end{split}
\end{equation*}
holds for all $x\in [a, b]$, where $K(\cdot,\cdot)$ is defined by
\eqref{eq:K:aftr}.
\end{corollary}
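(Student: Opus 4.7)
The unnamed final corollary is the $\lambda=0$ specialization of Corollary~\ref{corMR2}, so my plan is simply to substitute $\lambda=0$ into that corollary and check that each term collapses to the asserted form.

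First, on the left-hand side of Corollary~\ref{corMR2}, the factor $(1-\lambda)/(b-a)$ multiplying $f(x)$ becomes $1/(b-a)$, while the coefficient $\lambda/\big(2(b-a)\big)$ multiplying $f(a)+f(b)$ vanishes. Thus the first bracket reduces to $\tfrac{1}{b-a}f(x)\int_a^b(\sigma(t)+t)\Delta t$, which matches the first term in the statement. The remaining two summands on the left-hand side of Corollary~\ref{corMR2}, namely $-\frac{1}{b-a}\int_a^b(\sigma(t)+t)f(\sigma(t))\Delta t$ and $-\bigl(\tfrac{f(b)-f(a)}{(b-a)^2}\int_a^bK(t,x)\Delta t\bigr)$, do not depend on $\lambda$ via the outer coefficients (only through $K$), so they appear unchanged.

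Second, on the right-hand side, the bound $\bigl[\tfrac{1}{b-a}\int_a^b K^2(t,x)\Delta t - (\tfrac{1}{b-a}\int_a^b K(t,x)\Delta t)^2\bigr]^{1/2}$ times $\bigl[\tfrac{1}{b-a}\int_a^b (f^{\Delta}(t))^2\Delta t - (\tfrac{1}{b-a}\int_a^b f^{\Delta}(t)\Delta t)^2\bigr]^{1/2}$ carries over verbatim. The kernel \eqref{eq:K:aftr} specialized at $\lambda=0$ becomes $K(t,x)=t^2-a^2$ on $[a,x)$ and $K(t,x)=t^2-b^2$ on $[x,b]$, which is consistent with the reference the author keeps in the statement.

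Since all four steps are immediate substitutions with no cancellations or estimates needed beyond those already established in Corollary~\ref{corMR2}, there is no genuine obstacle; the only care required is to verify that the $\lambda=0$ specialization of the coefficient $\lambda/\big(2(b-a)\big)$ genuinely removes the $f(a)+f(b)$ contribution, which it does. Hence the proof is a single line: apply Corollary~\ref{corMR2} with $\lambda=0$.
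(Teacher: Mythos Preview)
Your proposal is correct and follows exactly the paper's own proof, which consists of the single line ``Choose $\lambda=0$ in the inequality of Corollary~\ref{corMR2}.'' Your additional verification that each term collapses as claimed (including the specialization of the kernel \eqref{eq:K:aftr}) is accurate and simply spells out what that one-line proof leaves implicit.
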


\begin{proof}
Choose $\lambda=0$ in the inequality of Corollary~\ref{corMR2}.
\end{proof}

Our Theorem~\ref{thm2} is new even for very standard time scales.

\begin{corollary}
\label{cor3}
If $w, f : [a, b]\rightarrow \mathbb{R}$ are differentiable,
$\psi : [0, 1]\rightarrow [0, 1]$, then
\begin{equation*}
\begin{split}
&\Bigg|\left[\dfrac{1+\psi(1-\lambda)-\psi(\lambda)}{2(b-a)}f(x) 
+ \dfrac{\psi(\lambda)f(a)+\left(1-\psi(1-\lambda)\right)f(b)}{2(b-a)}\right]\int_a^b w'(t)dt\\
& \qquad - \frac{1}{b-a}\int_a^b w'(t)f(t)dt 
-\Bigg(\dfrac{f(b)-f(a)}{(b-a)^2}\int_a^b K(t, x)dt \Bigg)\Bigg|\\
&\leq \Bigg[\dfrac{1}{b-a}\int_a^b K^2(t, x)dt - \Bigg(\dfrac{1}{b-a}
\int_a^b K(t, x)dt \Bigg)^2\Bigg]^\frac{1}{2}\\
&\qquad \times\Bigg[\dfrac{1}{b-a}\int_a^b \Big(f'(t)\Big)^2dt 
- \Bigg(\dfrac{1}{b-a}\int_a^b f'(t)dt \Bigg)^2\Bigg]^\frac{1}{2}
\end{split}
\end{equation*}
holds for all $ x\in [a, b]$ and $\lambda\in[0, 1]$,  
where $K(\cdot,\cdot)$ is given by \eqref{WE}.
\end{corollary}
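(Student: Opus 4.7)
The plan is to obtain Corollary~\ref{cor3} as a direct specialization of Theorem~\ref{thm2} to the continuous time scale $\mathbb{T}=\mathbb{R}$. The point is that under the hypotheses of the corollary (ordinary differentiability of $w$ and $f$ on $[a,b]$), all the ingredients appearing in \eqref{IneqMR2} collapse to their classical analogues, so essentially no new work is required beyond translating notation.

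First I would set $\mathbb{T}=\mathbb{R}$ inside Theorem~\ref{thm2}. Under this choice the graininess vanishes, hence $\sigma(t)=t$ for every $t\in[a,b]$, the delta derivative becomes the ordinary derivative ($f^{\Delta}=f'$ and $w^{\Delta}=w'$), and the delta integral reduces to the Riemann integral. In particular, the weight $\nu=w^{\Delta}$ from Theorem~\ref{thm2} becomes $\nu=w'$, so the terms $\int_a^b \nu(t)\Delta t$ and $\int_a^b \nu(t)f(\sigma(t))\Delta t$ transform exactly into the $\int_a^b w'(t)\,dt$ and $\int_a^b w'(t)f(t)\,dt$ appearing in the statement.

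Next I would check that the kernel $K(\cdot,\cdot)$ from \eqref{WE} is unchanged in form when $\mathbb{T}=\mathbb{R}$, since its definition only involves the values of $w$ at $a$, $b$, and the running argument; the same is true for the constant $\Phi(\lambda)$ from \eqref{eq:Phi}. The remaining terms in \eqref{IneqMR2}, namely
\[
\frac{1}{b-a}\int_a^b K^2(t,x)\Delta t,\qquad
\frac{1}{b-a}\int_a^b K(t,x)\Delta t,\qquad
\frac{1}{b-a}\int_a^b \bigl(f^{\Delta}(t)\bigr)^2\Delta t,
\]
translate directly into their Riemann-integral counterparts on the right-hand side of the claimed inequality.

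There is no genuine obstacle here; the only thing to be careful about is the bookkeeping of the substitutions $\sigma(t)\mapsto t$ and $\Delta t\mapsto dt$ in every term, so that the left-hand side of \eqref{IneqMR2} matches the claimed left-hand side of Corollary~\ref{cor3} term by term. Once these identifications are made, the inequality of Corollary~\ref{cor3} is precisely the statement of Theorem~\ref{thm2} read in the continuous case, and the proof is complete.
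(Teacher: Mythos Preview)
Your proposal is correct and follows exactly the paper's approach: the paper's proof consists of the single sentence ``Choose $\mathbb{T}=\mathbb{R}$ in Theorem~\ref{thm2},'' and your write-up is simply a more detailed unpacking of that specialization.
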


\begin{proof}
Choose $\mathbb{T}=\mathbb{R}$ in Theorem~\ref{thm2}.
\end{proof}

\begin{corollary}
\label{cor4}
If $w, f :\{a,a+1, \ldots, b-1, b\} \rightarrow \mathbb{R}$, 
$\psi : [0, 1]\rightarrow [0, 1]$, then
\begin{equation*}
\begin{split}
&\Bigg|\left[\dfrac{1+\psi(1-\lambda)
-\psi(\lambda)}{2(b-a)}f(x) + \dfrac{\psi(\lambda)f(a)
+\left(1-\psi(1-\lambda)\right)f(b)}{2(b-a)}\right]\sum_{t=a}^{b-1} \Delta w(t)\\
& \qquad - \frac{1}{b-a}\sum_{t=a}^{b-1}\Delta w(t)f(t+1) - \Bigg(\dfrac{f(b)-f(a)}{(b-a)^2}
\sum_{t=a}^{b-1} K(t, x)\Bigg)\Bigg|\\
&\leq \Bigg[\dfrac{1}{b-a}\sum_{t=a}^{b-1} K^2(t, x) 
- \Bigg(\dfrac{1}{b-a}\sum_{t=a}^{b-1} K(t, x)\Bigg)^2\Bigg]^\frac{1}{2}\\
&\qquad \times\Bigg[\dfrac{1}{b-a}\sum_{t=a}^{b-1} \Big(\Delta f(t)\Big)^2 
- \Bigg(\dfrac{1}{b-a}\sum_{t=a}^{b-1} \Delta f(t) \Bigg)^2\Bigg]^\frac{1}{2}
\end{split}
\end{equation*}
holds for all $ x\in \{a,a+1, \ldots, b-1, b\}$ and $\lambda\in[0, 1]$,  
where $K(\cdot,\cdot)$ is given by \eqref{WE} and $\Delta$ denotes 
the forward difference operator, that is, $\Delta \xi(t) = \xi(t+1)-\xi(t)$.
\end{corollary}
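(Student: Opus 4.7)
The plan is to derive Corollary~\ref{cor4} as a direct specialization of Theorem~\ref{thm2} to the discrete time scale $\mathbb{T} = \mathbb{Z}$, so essentially no new analytic content is required beyond a careful dictionary between the time-scale operations and their discrete counterparts. On $\mathbb{T}=\mathbb{Z}$ one has $\sigma(t) = t+1$ and $\mu(t) \equiv 1$, the delta-derivative of any function $\xi$ reduces to the forward difference $\xi^{\Delta}(t) = \xi(t+1) - \xi(t) = \Delta \xi(t)$, and Definition~\ref{Pdef1} gives $\int_{a}^{b} g(t)\,\Delta t = \sum_{t=a}^{b-1} g(t)$ whenever $a,b\in\mathbb{Z}$ with $a<b$. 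In particular, the hypothesis $w^{\Delta}(t) = \nu(t)$ of Theorem~\ref{thm2} becomes $\nu(t) = \Delta w(t)$, so that $\int_a^b \nu(t)\,\Delta t$ translates to $\sum_{t=a}^{b-1}\Delta w(t)$.

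With this dictionary, I would rewrite each term on the left-hand side of \eqref{IneqMR2} in order: the bracketed prefactor containing $\psi$ stays formally unchanged; the integral $\int_a^b \nu(t)\,\Delta t$ becomes $\sum_{t=a}^{b-1}\Delta w(t)$; the term $\int_a^b \nu(t) f(\sigma(t))\,\Delta t$ becomes $\sum_{t=a}^{b-1} \Delta w(t)\, f(t+1)$, reflecting the shift introduced by $\sigma$; and $\int_a^b K(t,x)\,\Delta t$ becomes $\sum_{t=a}^{b-1} K(t,x)$. The right-hand side is handled analogously: both $\int_a^b K^{2}(t,x)\,\Delta t$ and $\int_a^b K(t,x)\,\Delta t$ become the obvious sums, and the two integrals involving $f^{\Delta}$ become sums over $(\Delta f(t))^{2}$ and $\Delta f(t)$, respectively. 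The kernel $K(t,x)$ is defined pointwise by \eqref{WE} and therefore carries over unchanged.

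Once these substitutions are made, the inequality claimed in Corollary~\ref{cor4} is exactly the specialization of \eqref{IneqMR2}, so the conclusion follows immediately from Theorem~\ref{thm2}. There is no real obstacle: the only thing worth double-checking is that each appearance of $\sigma(t)$ is converted to $t+1$ (which matters for the term involving $f(\sigma(t))$ but not for purely $K$- or $f^{\Delta}$-terms), and that the hypotheses on $\nu$, $w$, $f$, $\psi$ are automatically satisfied in the discrete setting, where every function on the finite set $\{a,a+1,\ldots,b\}$ is trivially $rd$-continuous and delta-differentiable on $\mathbb{T}^{\kappa}=\{a,\ldots,b-1\}$. The proof therefore reduces to a single sentence invoking Theorem~\ref{thm2} with $\mathbb{T}=\mathbb{Z}$.
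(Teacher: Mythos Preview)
Your proposal is correct and follows exactly the paper's approach: the paper's proof is the single sentence ``Choose $\mathbb{T}=\mathbb{Z}$ in Theorem~\ref{thm2},'' and your write-up simply unpacks the time-scale-to-discrete dictionary that makes this specialization valid.
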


\begin{proof}
Choose $\mathbb{T}=\mathbb{Z}$ in Theorem~\ref{thm2}.
\end{proof}

\begin{corollary}
\label{cor5}
Let $m, n \in \mathbb{N}$ with $m<n$,
$w, f:[q^m, q^n]\rightarrow \mathbb{R}$ 
and $\psi : [0, 1] \rightarrow [0, 1]$. Then, the inequality
\begin{equation*}
\begin{split}
&\Bigg|\left[\dfrac{1+\psi(1-\lambda)-\psi(\lambda)}{2(q^n - q^m)}f(x) 
+ \dfrac{\psi(\lambda)f(q^m)+\left(1-\psi(1-\lambda)\right)f(q^n)}{2(q^n - q^m)}\right]
\int_{q^m}^{q^n} D_{q}w(t) d_qt\\
& \quad - \frac{1}{q^n - q^m}\int_{q^m}^{q^n} D_{q}w(t)f(qt) d_qt 
-\Bigg(\dfrac{f(b)-f(a)}{(q^n - q^m)^2}\sum_{j=m}^{n-1}K(q^j,x)\Bigg)\Bigg|\\
\end{split}
\end{equation*}
\begin{equation*}
\begin{split}
&\leq \Bigg[\dfrac{1}{q^n-q^m}\sum_{j=m}^{n-1}K^2(q^j,x) 
- \Bigg(\dfrac{1}{q^n-q^m}\sum_{j=m}^{n-1}K(q^j,x) \Bigg)^2\Bigg]^\frac{1}{2}\\
&\quad \times\Bigg[\dfrac{1}{q^n-q^m}\sum_{j=m}^{n-1}
\Bigg(\dfrac{f(q^{j+1}) - f(q^j)}{(q-1)q^j}\Bigg)^2 
- \Bigg(\dfrac{1}{q^n-q^m}\sum_{j=m}^{n-1}\dfrac{f(q^{j+1}) 
- f(q^j)}{(q-1)q^j}\Bigg)^2\Bigg]^\frac{1}{2}
\end{split}
\end{equation*}
holds for all $x\in [q^m, q^n]$ and 
$\lambda \in [0,1]$ with $K(q^j, x)$ 
given by \eqref{eq:K:quantum}.
\end{corollary}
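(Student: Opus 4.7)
The plan is to obtain the corollary as a direct specialization of Theorem~\ref{thm2} to the time scale $\mathbb{T} = q^{\mathbb{N}_0}$ with $q > 1$, taking $a = q^m$ and $b = q^n$. On this time scale one has the standard identifications recalled in \cite{MR1865777}: $\sigma(t) = qt$, the delta derivative becomes the $q$-derivative $D_q f(t) = (f(qt) - f(t))/((q-1)t)$, and the $\Delta$-integral coincides with the Jackson $q$-integral $\int_{q^m}^{q^n} g(t)\,\Delta t = \int_{q^m}^{q^n} g(t)\, d_q t = (q-1)\sum_{j=m}^{n-1} q^j g(q^j)$. In particular, $\nu = w^{\Delta} = D_q w$ and $f(\sigma(t)) = f(qt)$.

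First, I would rewrite the left-hand side of \eqref{IneqMR2} under these substitutions: the factor $b-a$ becomes $q^n - q^m$; the boundary values $f(a), f(b)$ become $f(q^m), f(q^n)$; the weight $\int_a^b \nu(t)\,\Delta t$ becomes $\int_{q^m}^{q^n} D_q w(t)\, d_q t$; and the averaged sample $\int_a^b \nu(t) f(\sigma(t))\,\Delta t$ becomes $\int_{q^m}^{q^n} D_q w(t) f(qt)\, d_q t$. This directly matches the first two lines of the corollary's left-hand side.

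Next, I would discretize the terms involving $K(t,x)$. Since $[q^m, q^n]\cap\mathbb{T} = \{q^m, q^{m+1}, \ldots, q^n\}$, the $\Delta$-integrals $\int_a^b K(t, x)\,\Delta t$, $\int_a^b K^2(t, x)\,\Delta t$, and $\int_a^b (f^{\Delta}(t))^2\,\Delta t$ collapse to finite sums indexed by $j = m, \ldots, n-1$ of the corresponding quantities sampled at $t = q^j$, with $K(q^j, x)$ given by \eqref{eq:K:quantum} and $f^{\Delta}(q^j) = (f(q^{j+1}) - f(q^j))/((q-1)q^j)$. Inserting these into the two variance-like brackets on the right-hand side of \eqref{IneqMR2} yields precisely the Cauchy--Schwarz-type bound stated in the corollary.

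The only step that requires real care is the bookkeeping of the graininess factors $\mu(q^j) = (q-1)q^j$ that arise when rewriting each $\Delta$-integral as a Jackson sum, so that the constants combine correctly and match the displayed form of the corollary (in particular the prefactors $1/(q^n-q^m)$ and $1/(q^n-q^m)^2$). Beyond this essentially routine accounting, no new idea is needed past Theorem~\ref{thm2}.
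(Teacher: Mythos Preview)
Your approach is exactly the paper's: specialize Theorem~\ref{thm2} to the time scale $\mathbb{T}=q^{\mathbb{N}_0}$ with $a=q^m$, $b=q^n$, using the standard identifications $\sigma(t)=qt$, $f^\Delta=D_qf$, and the Jackson integral for $\Delta$-integrals. The paper's own proof is a single sentence to this effect, so your more detailed substitution-by-substitution outline is, if anything, more explicit than what the paper provides.
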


\begin{proof}
Let $\mathbb{T}=q^{\mathbb{N}_0}$ with $q>1$, $a=q^m$ and $b=q^n$, $m<n$. 
Then the results is a direct consequence of Theorem~\ref{thm2}.
\end{proof}

Some other special cases of our Theorem~\ref{thm2}
can be found in \cite{MR3311686}.

% -------------------------------------------------------------------------

\section*{Acknowledgement}

The authors would like to thank an anonymous 
Referee for several comments and suggestions, 
which were very useful to improve the paper. 

% -------------------------------------------------------------------------

\Refs

\bib{MR3307947}{book}{
	author={Agarwal, Ravi},
	author={O'Regan, Donal},
	author={Saker, Samir},
	title={Dynamic inequalities on time scales},
	publisher={Springer, Cham},
	date={2014},
	pages={x+256},
	isbn={978-3-319-11001-1},
	isbn={978-3-319-11002-8},
	doi={10.1007/978-3-319-11002-8},
}

\bib{MR1859660}{article}{
	author={Agarwal, R.},
	author={Bohner, M.},
	author={Peterson, A.},
	title={Inequalities on time scales: a survey},
	journal={Math. Inequal. Appl.},
	volume={4},
	date={2001},
	number={4},
	pages={535--557},
	issn={1331-4343},
	doi={10.7153/mia-04-48},
}

\bib{MR3445007}{book}{
	author={Anastassiou, George A.},
	title={Frontiers in time scales and inequalities},
	series={Series on Concrete and Applicable Mathematics},
	volume={17},
	publisher={World Scientific Publishing Co. Pte. Ltd., Hackensack, NJ},
	date={2016},
	pages={ix+278},
	isbn={978-981-4704-43-4},
}

\bib{MR3311686}{article}{
	author={Awan, Khalid Mahmood},
	author={Pe\v cari\'c, Josip},
	author={Penava, Mihaela Ribi\v ci\'c},
	title={Companion inequalities to Ostrowski-Gr\"uss type inequality and
		applications},
	journal={Turkish J. Math.},
	volume={39},
	date={2015},
	number={2},
	pages={228--234},
	issn={1300-0098},
	doi={10.3906/mat-1404-27},
}

\bib{MR3526108}{book}{
	author={Bari{\'c}, Josipa},
	author={Bibi, Rabia},
	author={Bohner, Martin},
	author={Nosheen, Ammara},
	author={Pe{\v{c}}ari{\'c}, Josip},
	title={Jensen inequalities on time scales},
	series={Monographs in Inequalities},
	volume={9},
	note={Theory and applications},
	publisher={ELEMENT, Zagreb},
	date={2015},
	pages={ix+284},
	isbn={978-953-197-597-1},
}

\bib{BM2}{article}{
	author={Bohner, Martin},
	author={Matthews, Thomas},
	title={The Gr\"uss inequality on time scales},
	journal={Commun. Math. Anal.},
	volume={3},
	date={2007},
	number={1},
	pages={1--8},
	issn={1938-9787},
}

\bib{BM1}{article}{
	author={Bohner, Martin},
	author={Matthews, Thomas},
	title={Ostrowski inequalities on time scales},
	journal={JIPAM. J. Inequal. Pure Appl. Math.},
	volume={9},
	date={2008},
	number={1},
	pages={Article 6, 8},
	issn={1443-5756},
}

\bib{BookTS:2001}{book}{
	author={Bohner, Martin},
	author={Peterson, Allan},
	title={Dynamic equations on time scales},
	note={An introduction with applications},
	publisher={Birkh\"auser Boston, Inc., Boston, MA},
	date={2001},
	pages={x+358},
	isbn={0-8176-4225-0},
	doi={10.1007/978-1-4612-0201-1},
}

\bib{BookTS:2003}{collection}{
	author={Bohner, Martin},
	author={Peterson, Allan},	
	title={Advances in dynamic equations on time scales},
	publisher={Birkh\"auser Boston, Inc., Boston, MA},
	date={2003},
	pages={xii+348},
	isbn={0-8176-4293-5},
	doi={10.1007/978-0-8176-8230-9},
}

\bib{DB}{article}{
	author={Dragomir, S. S.},
	author={Barnett, N. S.},
	title={An Ostrowski type inequality for mappings whose second derivatives
		are bounded and applications},
	journal={J. Indian Math. Soc. (N.S.)},
	volume={66},
	date={1999},
	number={1-4},
	pages={237--245},
	issn={0019-5839},
}

\bib{Hilger}{article}{
	author={Hilger, Stefan},
	title={Analysis on measure chains---a unified approach to continuous and
		discrete calculus},
	journal={Results Math.},
	volume={18},
	date={1990},
	number={1-2},
	pages={18--56},
	issn={0378-6218},
	doi={10.1007/BF03323153},
}

\bib{MR1865777}{book}{
	author={Kac, Victor},
	author={Cheung, Pokman},
	title={Quantum calculus},
	series={Universitext},
	publisher={Springer-Verlag, New York},
	date={2002},
	pages={x+112},
	isbn={0-387-95341-8},
	doi={10.1007/978-1-4613-0071-7},
}

\bib{LJ}{article}{
	author={Liu, Wen-jun},
	title={Some weighted integral inequalities with a parameter and
		applications},
	journal={Acta Appl. Math.},
	volume={109},
	date={2010},
	number={2},
	pages={389--400},
	issn={0167-8019},
	doi={10.1007/s10440-008-9323-2},
}

\bib{Liu2}{article}{
	author={Liu, Wenjun},
	author={Ng{\^o}, Qu{\~{o}}c Anh},
	author={Chen, Wenbing},
	title={Ostrowski type inequalities on time scales for double integrals},
	journal={Acta Appl. Math.},
	volume={110},
	date={2010},
	number={1},
	pages={477--497},
	issn={0167-8019},
	doi={10.1007/s10440-009-9456-y},
}

\bib{Liu1}{article}{
	author={Liu, Wenjun},
	author={Ng{\^o}, Qu{\~{o}}c Anh},
	author={Chen, Wenbing},
	title={On new Ostrowski type inequalities for double integrals on time
		scales},
	journal={Dynam. Systems Appl.},
	volume={19},
	date={2010},
	number={1},
	pages={189--198},
	issn={1056-2176},
}

\bib{Liu3}{article}{
	author={Liu, Wenjun},
	author={Tuna, Adnan},
	author={Jiang, Yong},
	title={On weighted Ostrowski type, trapezoid type, Gr\"uss type and
		Ostrowski-Gr\"uss like inequalities on time scales},
	journal={Appl. Anal.},
	volume={93},
	date={2014},
	number={3},
	pages={551--571},
	issn={0003-6811},
	doi={10.1080/00036811.2013.786045},
}

\bib{Liu4}{article}{
	author={Liu, Wenjun},
	author={Tuna, Adnan},
	author={Jiang, Yong},
	title={New weighted Ostrowski and Ostrowski-Gr\"uss type inequalities on
		time scales},
	journal={An. \c Stiin\c t. Univ. Al. I. Cuza Ia\c si. Mat. (N.S.)},
	volume={60},
	date={2014},
	number={1},
	pages={57--76},
	issn={1221-8421},
	doi={10.2478/aicu-2013-0002},
}

\bib{MR2816120}{article}{
	author={Mozyrska, Dorota},
	author={Paw{\l}uszewicz, Ewa},
	author={Torres, Delfim F. M.},
	title={Inequalities and majorisations for the Riemann-Stieltjes integral
		on time scales},
	journal={Math. Inequal. Appl.},
	volume={14},
	date={2011},
	number={2},
	pages={281--293},
	issn={1331-4343},
	doi={10.7153/mia-14-23},
	arXiv={1003.4772},
}

\bib{Nwaeze}{article}{
	author={Nwaeze, Eze R.},
	title={A new weighted Ostrowski type inequality on arbitrary time scale},
	journal={J. King Saud Univ. Sci.},
	volume={29},
	date={2017},
	number={2},
	pages={230--234},
	doi={10.1016/j.jksus.2016.09.006},
}

\bib{Ozkan1}{article}{
	author={{\"O}zkan, Umut Mutlu},
	author={Yildirim, H{\"u}seyin},
	title={Gr\"uss type inequalities for double integrals on time scales},
	journal={Comput. Math. Appl.},
	volume={57},
	date={2009},
	number={3},
	pages={436--444},
	issn={0898-1221},
	doi={10.1016/j.camwa.2008.11.006},
}

\bib{Ozkan2}{article}{
	author={{\"O}zkan, Umut Mutlu},
	author={Yildirim, H{\"u}seyin},
	title={Ostrowski type inequality for double integrals on time scales},
	journal={Acta Appl. Math.},
	volume={110},
	date={2010},
	number={1},
	pages={283--288},
	issn={0167-8019},
	doi={10.1007/s10440-008-9407-z},
}

\bib{XuFang}{article}{
	author={Xu, Gaopeng},  
	author={Fang, Zhong Bo}, 
	title={A new Ostrowski type inequality on time scales},
	journal={J. Math. Inequal.},
	volume={10},
	date={2016},
	number={3},
	pages={751--760},
}

\endRefs

% ------------------------------------------

\end{document}